\newtheorem {theorem}{Theorem}[section]
\newtheorem {corollary}{Corollary}[section]
\newtheorem {lemma}{Lemma}[section]
\newtheorem {proposition}{Proposition}[section]
\theoremstyle{definition}
\newtheorem{definition}{Definition}[section]
\newtheorem{example}{Example}[section]
\newtheorem{remark}{Remark}[section]
\def\ar{a\kern-.370em\raise.16ex\hbox{\char95\kern-0.53ex\char'47}\kern.05em}
\def\ees{{\accent"5E e}\kern-.385em\raise.2ex\hbox{\char'23}\kern-.08em}
\def\eex{{\accent"5E e}\kern-.470em\raise.3ex\hbox{\char'176}}
\def\AR{A\kern-.46em\raise.80ex\hbox{\char95\kern-0.53ex\char'47}\kern.13em}
\def\EES{{\accent"5E E}\kern-.5em\raise.8ex\hbox{\char'23 }}
\def\EEX{{\accent"5E E}\kern-.60em\raise.9ex\hbox{\char'176}\kern.1em}
\def\ow{o\kern-.42em\raise.82ex\hbox{
   \vrule width .12em height .0ex depth .075ex \kern-0.16em \char'56}\kern-.07em}
\def\OW{O\kern-.460em\raise1.36ex\hbox{
\vrule width .13em height .0ex depth .075ex \kern-0.16em \char'56}\kern-.07em}
\def\UW{U\kern-.42em\raise1.36ex\hbox{
\vrule width .13em height .0ex depth .075ex \kern-0.16em \char'56}\kern-.07em}
\def\DD{D\kern-.7em\raise0.4ex\hbox{\char '55}\kern.33em}
\newcommand*{\B}{\mathbb{B}}
\newcommand*{\R}{\mathbb{R}}
\newcommand*{\Z}{\mathbb{Z}}
\title[]{Some classical analysis results for continuous definable mappings}
\author{TRUONG XUAN DUC HA$^\dag$}
\address[Truong Xuan Duc Ha$^\dag$]{Thanglong Institute of Mathematics and Applied Sciences, Thanglong University, Hanoi, Vietnam}
\email{txdha@math.ac.vn}
\author{TI\EES N-S\OW N PH\d{A}M$^{\ddag}$}
\address[Ti\ees n-S\ow n Ph\d{a}m$^{\ddag}$]{Department of Mathematics, Dalat University, 1 Phu Dong Thien Vuong, Dalat, Vietnam}
\email{sonpt@dlu.edu.vn}
\date{ \today}
\subjclass[2000]{49J52, 03C64, 26B10, 55M25}
\keywords{Brouwer degree, implicit function and inverse function theorems, mean value theorem, Sard theorem, global invertibility, surjectivity, openness, definable mapping, regularity}
\begin{document}

\maketitle

\begin{abstract} 
In this paper, we show that some fundamental results for smooth mappings (e.g., the Brouwer degree formula, the implicit  function and inverse function theorems, the mean value theorem, Sard's theorem, Hadamard's global invertibility criteria, Pourciau's surjectivity and openness results) have natural extensions for  continuous mappings that are definable in o-minimal structures. The arguments rely on nice properties of definable mappings and sets.
\end{abstract}

\section{Introduction}
It is well-known that classical theorems for smooth mappings such as {\em the implicit function and inverse function theorems, the mean value theorem, $\ldots$} are basic results in real analysis that play a crucial role in a variety of applications including optimization, electrical network theory, theory of economic equilibria, stability of nonlinear systems. Such results have received a huge amount of attention over the years. Variations and corollaries have been discovered and extensions to non-smooth mappings have been established. In this paper we have cited, essentially, only few relevant contributions. For more details, we refer the reader to the comprehensive monographs \cite{Clarke1990, Dontchev2009, Jeyakumar2008, Klatte2002, Krantz2013} with the references therein.  
 
The purpose of the present paper is to generalize some classical theorems for continuous mappings which are definable in o-minimal structures (see Section~\ref{Section3} for the definitions). To be more specific, consider a continuous definable mapping $f \colon  \Omega \to \R^n$, where $\Omega$ is an open definable set in $\R^n.$  For this mapping, the derivative mapping $df(x)  \colon \mathbb{R}^n \to \mathbb{R}^n$ may not exist everywhere in $\Omega,$ but one can at least assign to each point $x \in \Omega$ a certain value $\nu_f(x) \in [0, +\infty].$ Then we say that $x$ is a {\em regular point} of $f$ if $\nu_f(x) > 0.$ Now this notion of regularity has many nice properties as we shall see, but only the following fact needs to be singled out here: $x$ is a regular point of $f$ if and only if the mapping $df(x)$ is surjective whenever $f$ is continuously differentiable on a neighbourhood of $x.$ Results involving the functions $\nu_f(\cdot)$ therefore extend their continuously differentiable counterparts. Our proofs are quite elementary, using some basic facts in the theory of o-minimal structures.

We should mention at this point that definability is a fundamental concept of the theory of o-minimal structures (see, e.g., \cite{Coste2000, Dries1996, Dries1998}) which is also being very actively developed in the eighties of the twentieth century. The important point about definable mappings and sets is that they enjoy many nice properties and that they are void of ``pathologies'' so typical for generic objects of non-smooth analysis. As mentioned by Ioffe \cite{Ioffe2007}, definable mappings and sets look like an almost ideal playground for applicable finite dimensional variational analysis. So our study would provide some tools for further applications of definable mappings in non-smooth analysis.

The paper is organized as follows. Section~\ref{SectionPreliminary} contains preliminaries about definable mappings and sets. In Section~\ref{Section3}, the notions of regular and critical points and values  for continuous definable mappings are given and a non-smooth version of the Sard theorem is established. In Section~\ref{Section4} we formulate the mean value theorem, the inverse function and implicit function theorems for continuous definable mappings. In Section~\ref{Section5}, we show that the sign of the determinant of the derivative enjoys a stability property around regular points and can be used to calculate the Brouwer degree. Section~\ref{Section6} concerns with some global properties of continuous definable mappings; in particular, we extend some results of Hadamard~\cite{Hadamard1906} and Pourciau~\cite{Pourciau1983-1}  by providing sufficient conditions for such a mapping to be a global homeomorphism and to be surjective and quasi-interior.

\section{Preliminaries} \label{SectionPreliminary}

\subsection{Notation} 
In this paper,  we deal with the Euclidean space $\mathbb{R}^n$ equipped with the usual scalar product $\langle \cdot, \cdot \rangle$ and the corresponding Euclidean norm $\| \cdot\|.$ If $x \in \mathbb{R}^n$ and $\Omega \subset \mathbb{R}^n$ is a non-empty set then the distance from $x$ to $\Omega$ is defined by $\mathrm{dist}(x, \Omega):=\inf_{y \in \Omega}\|x - y\|.$ We denote by $\mathbb{B}_r(x)$ the open ball centered at $x$ with radius $r.$ The closure, boundary, convex hull, and closed convex hull of a set $\Omega \subset \mathbb{R}^n$ will be written as $\overline{\Omega},$ $\partial \Omega,$ ${\mathrm{co}}\Omega,$ and $\overline{\mathrm{co}}\Omega,$ respectively. 

We denote by $\mathbb{R}^{n\times n}$ the space of all linear mappings from $\R^n$ to $\R^n,$ which can be regarded as $n\times n$-matrices. The space $\mathbb{R}^{n\times n}$ is endowed with its usual matrix norm. For any $A \in \R^{n\times n},$ its {\em co-norm} is defined by
$$\sigma(A) :=\inf_{x \in \R^n,\,  \|x\|=1}\|Ax\|.$$
It is clear that $\sigma (A)>0$ if and only if $A$ is surjective. For further properties of the co-norm, see \cite{Kurdyka2000-1, Pourciau1988, Rabier1997}.

\subsection{O-minimal structures and definable mappings}

The notion of o-minimality was developed in the late 1980s after it was noticed that many proofs of analytic and geometric properties of semi-algebraic sets and mappings could be carried over verbatim for sub-analytic sets and mappings. We refer the reader to \cite{Coste2000, Dries1998, Dries1996} for the basic properties of o-minimal structures used in this paper. 

\begin{definition}{\rm
An {\em o-minimal structure} on $(\mathbb{R}, +, \cdot)$ is a sequence $\mathcal{D} := (\mathcal{D}_n)_{n \in \mathbb{N}}$ such that for each $n \in \mathbb{N}$:
\begin{itemize}
\item [(a)] $\mathcal{D}_n$ is a Boolean algebra of subsets of $\mathbb{R}^n$.
\item [(b)] If $X \in \mathcal{D}_m$ and $Y \in \mathcal{D}_n$, then $X \times Y \in \mathcal {D}_{m+n}.$
\item [(c)] If $X \in \mathcal {D}_{n + 1},$ then $\pi(X) \in \mathcal {D}_n,$ where $\pi \colon \mathbb{R}^{n+1} \to \mathbb{R}^n$ is the projection on the first $n$ coordinates.
\item [(d)] $\mathcal{D}_n$ contains all algebraic subsets of $\mathbb{R}^n.$
\item [(e)] Each set belonging to $\mathcal{D}_1$ is a finite union of points and intervals.
\end{itemize}
}\end{definition}

A set belonging to $\mathcal{D}$ is said to be {\em definable} (in that structure). {\em Definable mappings} in structure $\mathcal{D}$ are mappings whose graphs are definable sets in $\mathcal{D}.$

Examples of o-minimal structures are
\begin{itemize}
\item the semi-algebraic sets (by the Tarski--Seidenberg theorem),
\item the globally sub-analytic sets, i.e., the sub-analytic sets of $\mathbb{R}^n$ whose (compact) closures in the real projective space $\mathbb{R}\mathbb{P}^n$ are sub-analytic (using Gabrielov's complement theorem).
\end{itemize}

In this note, we fix an o-minimal structure on $(\mathbb{R}, +, \cdot).$ The term ``definable'' means definable in this structure.
We recall some useful facts which we shall need later.

\begin{lemma} [{Monotonicity}] \label{MonotonicityLemma}
Let $f \colon (a, b) \rightarrow \mathbb{R}$ be a definable function and $p$ be a positive integer. Then there are finitely many points $a = t_0 < t_1 < \cdots < t_k = b$ such that the restriction of $f$ to each interval $(t_i, t_{i + 1})$ is of class $C^p$, and either constant or strictly monotone.
\end{lemma}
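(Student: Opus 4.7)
My plan is to combine two ingredients: axiom (e), which says every definable subset of $\mathbb{R}$ is a finite union of points and intervals, and the definability of various auxiliary subsets of $(a,b)$ associated to $f$ -- the continuity set, the $C^p$-regularity set, and the sign-of-derivative level sets. These auxiliary sets are definable because each can be described by a first-order formula using quantifiers over additional real variables and arithmetic operations applied to $f$, which preserve definability by axioms (b) and (c).

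First, I would introduce the definable set $C_p := \{x \in (a,b) : f \text{ is of class } C^p \text{ on some neighbourhood of } x\}$. By axiom (e), $C_p$ is a finite union of points and open intervals, and so is its complement $E$ in $(a,b)$. The main reduction is to show $E$ has empty interior. Arguing by contradiction, suppose an open sub-interval $J \subset E$. Then $f(J)$ is a definable subset of $\mathbb{R}$, hence either finite or containing an interval. If $f(J) = \{v_1,\ldots,v_m\}$ is finite, then $J$ is covered by the definable sets $f^{-1}(v_i) \cap J$; axiom (e) applied to each forces at least one to contain an open sub-interval on which $f$ is constant and hence $C^p$, contradicting $J \subset E$. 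In the remaining case one propagates regularity from the image back to the domain through the definable graph, inductively locating sub-intervals on which $f$ admits one-sided limits, is continuous, is differentiable, and finally is $C^p$; at each step axiom (e) converts a ``large'' definable subset of $J$ into an open sub-interval.

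Once cofiniteness of $C_p$ is established, it remains to refine each component of $C_p$ into pieces of strict monotonicity or constancy. On such a component $I$, the three sets $\{x \in I : f'(x) > 0\}$, $\{x \in I : f'(x) < 0\}$, and $\{x \in I : f'(x) = 0\}$ are definable and partition $I$; axiom (e) then splits $I$ into finitely many open sub-intervals of constant sign together with finitely many isolated points, and the classical mean value theorem (applied in each smooth piece) forces strict monotonicity on the first two types of sub-intervals and constancy on those of the last. Amalgamating this finite refinement over all components of $C_p$ with the finite exceptional set $E$ produces the partition points $a = t_0 < t_1 < \cdots < t_k = b$ demanded by the statement.

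The hard part is clearly the cofiniteness of $C_p$: that the set of points where $f$ fails to be $C^p$ locally has empty interior. This is where o-minimality genuinely enters the proof, through the necessity of transferring regularity visible in the image (where axiom (e) applies directly) back to the domain by means of the definable graph. Once this transfer is secured, the remainder of the argument is elementary one-variable calculus, coupled with repeated applications of axiom (e) to definable subsets of the base interval.
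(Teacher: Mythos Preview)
The paper does not prove this lemma; it is stated without proof as a standard fact from o-minimal geometry, with the references \cite{Coste2000, Dries1996, Dries1998} given just before the block of preliminary lemmas. So there is no ``paper's proof'' to compare against; your proposal stands against the literature.

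Your architecture is right --- show the non-$C^p$ locus has empty interior (hence is finite by axiom~(e)), then refine each $C^p$ component by the sign of $f'$ --- and the final monotonicity-from-sign step is fine once $C^1$ is granted. But the sentence handling Case~2 of the cofiniteness argument is not a proof, and the phrase ``propagates regularity from the image back to the domain'' is misleading: $f(J)$ containing an interval tells you nothing about differentiability of $f$ on $J$ (a strictly monotone nowhere-differentiable function has interval image; o-minimality must exclude this by other means). The standard argument (van den Dries, \emph{Tame Topology}, Chapter~3) proceeds in a specific order that your sketch partly inverts and entirely elides: first show every definable function has one-sided limits at each point (a nontrivial application of axiom~(e) to the supremum/infimum of $f$ on one-sided neighbourhoods), hence is continuous off a finite set; then establish piecewise \emph{monotonicity} directly, \emph{before} any differentiability, by analysing for each $x$ whether nearby values lie above or below $f(x)$ on each side; only then apply the already-proved monotonicity/continuity result to the difference quotients $h \mapsto (f(x+h)-f(x))/h$ to extract $C^1$; and finally induct on $p$ by applying the $C^{p-1}$ case to $f'$. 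Each of these is a distinct argument with its own use of axiom~(e), and your single sentence supplies none of them. You should either cite the result as the paper does, or fill in these steps explicitly.
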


\begin{lemma}[Curve selection lemma at infinity]\label{selectioncurve}
Let $X \subset \R^n$ be a definable set, $f \colon X \to \R^m$ be a continuous definable mapping and $p$ be a positive integer. Assume that there exists a sequence $x_k \in X$ such that $\|x_k\|\to\infty$ and $f(x_k)\to y$ for some $y \in \R^m.$ Then there exists a definable $C^p$-curve $\gamma \colon [r, +\infty) \to \R^n,$ $t \mapsto \gamma (t),$ such that $\gamma (t)\in X,$ $\|\gamma (t)\| = t,$ and $\lim_{t\to \infty}f(\gamma (t))= y.$
\end{lemma}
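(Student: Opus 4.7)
The plan is to reduce the statement to the classical (finite-point) curve selection lemma for o-minimal structures via the semi-algebraic inversion $\iota(x):=x/\|x\|^2$, and then to reparametrize the resulting curve by its norm. The two main tools are the Monotonicity Lemma~\ref{MonotonicityLemma} and the standard curve selection lemma in o-minimal structures (see, e.g., \cite{Dries1998}); the relevant fact about $\iota$ is that it is a semi-algebraic involution of $\R^n\setminus\{0\}$ carrying neighbourhoods of infinity to punctured neighbourhoods of the origin.

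For the reduction, I would first replace $X$ by $X\cap\{\|x\|\geq 1\}$ (harmless because $\|x_k\|\to\infty$), set $Y:=\iota(X)$, and define the continuous definable mapping $g\colon Y\to\R^m$ by $g(u):=f(\iota(u))$. The points $u_k:=\iota(x_k)\in Y$ satisfy $\|u_k\|=1/\|x_k\|\to 0$ and $g(u_k)=f(x_k)\to y$, so $(0,y)$ lies in the closure of $\mathrm{graph}(g)\subset\R^n\times\R^m$. Applying the classical curve selection lemma at $(0,y)$ to this definable graph produces a definable $C^p$ curve $s\mapsto(u(s),v(s))$ on some $[0,\varepsilon)$ with $(u(0),v(0))=(0,y)$ and $v(s)=g(u(s))$ for $s\in(0,\varepsilon)$. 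Setting $x(s):=\iota(u(s))\in X$ then yields a definable $C^p$ curve with $\|x(s)\|=1/\|u(s)\|\to+\infty$ and $f(x(s))=v(s)\to y$ as $s\to 0^+$.

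It remains to reparametrize by the norm. The definable $C^p$ function $\phi(s):=\|x(s)\|$ tends to $+\infty$ as $s\to 0^+$, so by Lemma~\ref{MonotonicityLemma} it is strictly decreasing on some $(0,\varepsilon')$; applying the same lemma to $\phi'$, one sees that near $0^+$ the derivative $\phi'$ is either a nonzero constant or strictly monotone, and in either case $\phi'$ is nowhere zero on some further subinterval. The classical $C^p$ inverse function theorem then supplies a definable $C^p$ inverse $\psi\colon(T,+\infty)\to(0,\varepsilon')$, and $\gamma(t):=x(\psi(t))$ satisfies $\|\gamma(t)\|=t$ together with $\lim_{t\to\infty}f(\gamma(t))=y$; restricting to $t\in[r,+\infty)$ for any $r>T$ gives the desired curve. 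The subtlest step, and the one where o-minimality is indispensable, is precisely this transition from strict monotonicity of $\phi$ to non-vanishing of $\phi'$: the finiteness dictated by the Monotonicity Lemma is what forbids oscillatory zero sets of $\phi'$ and underwrites the $C^p$ (rather than merely $C^0$) reparametrization.
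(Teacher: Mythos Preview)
The paper does not actually prove Lemma~\ref{selectioncurve}: it is stated in Section~\ref{SectionPreliminary} as one of several preliminary facts about o-minimal structures, quoted without proof. So there is no ``paper's proof'' to compare your argument against.

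That said, your argument is correct and is the standard way one establishes curve selection at infinity in the o-minimal setting: compactify via the semi-algebraic inversion $\iota(x)=x/\|x\|^{2}$, invoke the ordinary curve selection lemma at the finite limit point $(0,y)$ of $\mathrm{graph}(g)$, and then reparametrize by the norm. Two small points worth tightening. First, the curve produced by the classical lemma is continuous on $[0,\varepsilon)$ but only guaranteed $C^{p}$ on the open interval $(0,\varepsilon)$; you silently use only the latter, which is all you need, but the phrase ``definable $C^{p}$ curve on $[0,\varepsilon)$'' overstates it. Second, your justification that $\phi'$ is eventually nonvanishing is fine, but the cleanest way to say it is that the zero set of $\phi'$ is a definable subset of $(0,\varepsilon')$, hence a finite union of points and intervals; any interval of zeros would force $\phi$ to be constant there, contradicting strict monotonicity, so the zero set is finite and can be excised. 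With these cosmetic fixes the proof is complete.
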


\begin{lemma}\label{Lemma21}
Let $X \subset \mathbb{R}^n$ be an open definable set and let $f \colon X \to \mathbb R^m$ be a definable mapping. Then 
for each positive integer $p,$ the set of points where $f$ is not of class $C^p$ is a definable nowhere dense subset of $X$.
\end{lemma}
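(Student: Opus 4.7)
The plan is to prove the two assertions---definability and nowhere-denseness---separately, relying on standard tools from the theory of o-minimal structures. Let $S \subset X$ denote the set of points at which $f$ fails to be of class $C^p$.

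For the definability of $S$, the key point is that ``$f$ is differentiable at $x$ with derivative $A$'' is an $\varepsilon$--$\delta$ condition quantifying only over real numbers, hence is expressible by a first-order formula in the language of ordered rings augmented with a predicate for the graph of $f$. Iterating this $p$ times and adding continuity clauses (again $\varepsilon$--$\delta$) yields a first-order formula $\varphi(x)$ expressing ``$f$ is of class $C^p$ at $x$.'' Since $f$ is definable, the set defined by $\varphi$ is definable, and so is its complement $S$ in $X$. An equivalent route is to observe that wherever the partial derivatives of a definable function exist, they are themselves definable, and to iterate this fact $p$ times.

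For the nowhere-denseness of $S$, I would invoke the $C^p$-Cell Decomposition Theorem for o-minimal structures (see, e.g., \cite{Dries1998, Coste2000}). Applied to the definable mapping $f$, this yields a finite partition $X = C_1 \sqcup \cdots \sqcup C_k$ into definable $C^p$-cells such that each restriction $f|_{C_i}$ is of class $C^p$. Let $U$ be the union of those cells of dimension $n$; each such cell is open in $\mathbb{R}^n$, so $U$ is open in $X$, and on each $C_i \subset U$ the restriction $f|_{C_i}$ is $C^p$ on an open neighbourhood, whence $f$ itself is $C^p$ at every point of $U$. Therefore $S \subset X \setminus U$. The complement $X \setminus U$ is a finite union of cells of dimension strictly less than $n$, hence has dimension less than $n$. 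Since in an o-minimal structure a definable subset of $\mathbb{R}^n$ has non-empty interior if and only if its dimension equals $n$, the closed-in-$X$ set $X \setminus U$ has empty interior and is thus nowhere dense; \emph{a fortiori}, $S$ is nowhere dense in $X$.

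The substantive ingredient is the $C^p$-Cell Decomposition Theorem, which does all the heavy lifting; the remainder of the argument is bookkeeping. The only minor subtlety to confirm is that, on a top-dimensional cell $C_i$ (which is open in $X$), the assertion ``$f|_{C_i}$ is $C^p$'' coincides with ``$f$ is $C^p$ at every point of $C_i$''---and this is immediate because $C_i$ is open in $X$, so no boundary issues arise.
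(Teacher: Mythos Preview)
The paper does not supply a proof of this lemma: it is listed in Section~\ref{SectionPreliminary} among the ``useful facts'' quoted from the o-minimal literature (the references \cite{Coste2000, Dries1998, Dries1996}), so there is no in-paper argument to compare against. Your proposal is a correct proof and is exactly the standard one: definability follows because the $C^p$ condition is first-order, and nowhere-denseness follows from $C^p$-cell decomposition, since the complement of the union of the open (top-dimensional) cells has dimension $< n$. Your closing remark about why ``$f|_{C_i}$ is $C^p$'' upgrades to ``$f$ is $C^p$ at every point of $C_i$'' on open cells is the right thing to check, and your justification is fine.
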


The following useful property of definable sets is the Koopman--Brown theorem (cf. \cite[Chapter 7, Theorem 4.2]{Dries1998}. For $x, x' \in \R^n$ we denote by $[x, x']$ the segment with endpoints $x$ and $x'$, i.e., $[x, x']:=\{(1-t)x+tx'  \colon 0\leq t\leq 1\}$.
\begin{lemma}[{Good directions lemma}]\label{Koopman-Brown}
Let $X \subset \mathbb{R}^n$ be a definable set. If $X$ is nowhere dense in $\R^n,$ then for each $x \in \R^n$ the set 
$$\{x' \in \R^n : [x, x'] \cap X \textrm{ is finite} \}$$ 
is dense and definable in $\R^n.$
\end{lemma}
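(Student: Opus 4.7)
The plan is to split the lemma into its two assertions---definability and density of $G_x := \{x' \in \R^n : [x,x'] \cap X \text{ is finite}\}$---and handle each by a standard o-minimal argument. Definability is elementary: for fixed $x$ and varying $x'$, the family of sets $\{s \in [0,1] : (1-s)x + sx' \in X\}$ is uniformly definable in $x'$, and a definable subset of $\R$ is finite precisely when it contains no nondegenerate subinterval---a first-order property. Hence $G_x$ is cut out by a definable formula, so $G_x$ is definable.

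For density, fix $x \in \R^n$ and introduce the definable polar parameterization $\psi : S^{n-1} \times (0,\infty) \to \R^n \setminus \{x\}$ given by $\psi(v,t) = x + tv$, which is a definable homeomorphism. Set $Y := \psi^{-1}(X \setminus \{x\})$. Since $X$ is nowhere dense in $\R^n$, the o-minimal dimension theory gives $\dim X \leq n-1$, and therefore $\dim Y \leq n-1$ as well.

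I would then apply the fiber-dimension theorem for definable sets (cf.\ \cite{Dries1998}) to the projection $\pi : Y \to S^{n-1}$. Writing $B := \{v \in S^{n-1} : \dim Y_v \geq 1\}$, that theorem yields $\dim B + 1 \leq \dim Y \leq n-1$, whence $\dim B \leq n-2$. Because $S^{n-1}$ is an $(n-1)$-dimensional definable manifold, this forces $B$ to be nowhere dense in $S^{n-1}$, so the complementary set $G := S^{n-1} \setminus B$ of ``good directions'' is dense in $S^{n-1}$. For every $v \in G$, the fiber $Y_v$ is a definable $0$-dimensional subset of $(0,\infty)$, and by o-minimality such a set is finite.

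It remains to lift density from $S^{n-1}$ to $\R^n$. Given a nonempty open set $U \subset \R^n$, choose $y \in U$ with $y \neq x$ and write $y = x + t_0 v_0$ with $v_0 \in S^{n-1}$, $t_0 > 0$. By continuity of $\psi$ and density of $G$ I can pick $v \in G$ so close to $v_0$ that $x' := x + t_0 v$ still lies in $U$. Then
$[x,x'] \cap X \subset \{x\} \cup \psi(\{v\} \times (Y_v \cap (0, t_0]))$,
which is finite, so $x' \in G_x \cap U$. The main obstacle in this plan is the o-minimal fiber-dimension bound invoked in paragraph~three; everything else is routine bookkeeping once that step is available.
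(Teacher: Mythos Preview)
The paper does not actually prove this lemma; it is stated without proof and attributed to the Koopman--Brown theorem, with a reference to \cite[Chapter~7, Theorem~4.2]{Dries1998}. So there is nothing to compare your argument against in the paper itself.

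That said, your argument is correct and is essentially the standard o-minimal route. The definability of $G_x$ via ``contains no nondegenerate subinterval'' is a valid first-order characterisation of finiteness for definable subsets of $\R$. For density, your reduction to directions through $x$ via the definable polar map $\psi$ and the application of the fibre-dimension theorem to the projection $Y \to S^{n-1}$ are exactly what one finds in van den Dries's proof of the Good Directions Lemma: since $\dim Y \le n-1$, the set of $v$ with one-dimensional fibre has dimension at most $n-2$, hence its complement is dense in $S^{n-1}$. The lifting step is routine. One small point you might make explicit is that $x$ itself always lies in $G_x$ (since $[x,x]=\{x\}$), which disposes of the case where the test open set $U$ happens to be a neighbourhood of $x$; your argument already covers this implicitly because any nonempty open $U$ contains a point $y\ne x$, but it does no harm to say so.
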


By the cell decomposition theorem, for any $p \in \mathbb{N}$ and any nonempty definable subset $X$ of $\mathbb{R}^n,$ we can write $X$ as a disjoint union of finitely many definable $C^p$-manifolds of different dimensions. The {\em dimension} $\dim X$ of a nonempty definable  set $X$ can thus be defined as the dimension of the manifold of highest dimension of its decomposition. This dimension is well defined and independent of the decomposition of $X.$ By convention, the dimension of the empty set is taken to be negative infinity. 

\begin{lemma}\label{DimensionProposition1}
Let $X \subset \mathbb R^n$ be a definable set. Then 
\begin{enumerate}[{\rm (i)}]
\item If $X$ is nonempty then $\dim (\overline{X}\setminus X) < \dim X.$ In particular,  $\dim \overline{X} = \dim X.$
\item If $Y \subset \mathbb{R}^n$ is definable, then $\dim (X \cup Y) = \max \{\dim X, \dim Y\}.$
\item Let $f \colon X \rightarrow \mathbb R^m$ be a definable mapping. Then $\dim f(X) \leq \dim X.$
\end{enumerate}
\end{lemma}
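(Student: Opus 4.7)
The plan is to deduce all three parts from the cell decomposition theorem, which was already used to define the dimension of a definable set, together with its sharper ``compatible partition'' refinement.

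I would begin with (ii). Invoking the compatible version of the cell decomposition theorem, one obtains a finite partition of $\R^n$ into $C^1$-cells such that each of $X$ and $Y$ is a union of cells from this partition. Then $X \cup Y$ is also a union of cells, and since every such cell lies in $X$ or in $Y$, the maximum-dimensional cell belongs to one of them; this gives $\dim(X \cup Y) \leq \max\{\dim X, \dim Y\}$. The reverse inequality is monotonicity of dimension under inclusion, which follows from the same compatible decomposition because the cells making up $X$ form a cell decomposition of $X$ whose dimensions are bounded by $\dim(X \cup Y)$, and similarly for $Y$.

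For (iii), I would apply the cell decomposition theorem to the graph of $f$ inside $\R^n \times \R^m$; projecting onto the first $n$ coordinates produces a partition of $X$ into definable cells $C_1, \ldots, C_k$ on each of which $f$ is (the restriction of) a $C^1$ mapping. On a cell $C_i$ of dimension $d_i$, the image $f(C_i)$ is the $C^1$ image of a $d_i$-dimensional manifold, and the rank of $df$ along $C_i$ is at most $d_i$ on a dense subcell, so $\dim f(C_i) \leq d_i$. Writing $f(X) = \bigcup_i f(C_i)$ and applying (ii) gives $\dim f(X) \leq \max_i d_i = \dim X$.

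Finally, for (i), decompose $X$ into cells $C_1, \ldots, C_k$ so that $\dim X = \max_i \dim C_i$. Since taking closures commutes with finite unions,
$$\overline{X}\setminus X \ \subseteq \ \bigcup_{i=1}^k \bigl( \overline{C_i} \setminus C_i \bigr).$$
The crucial structural fact is that for every cell $C_i$ one has $\dim(\overline{C_i} \setminus C_i) < \dim C_i$; this is verified by induction on the ambient dimension, using the inductive construction of cells as graphs of definable $C^1$-functions and as open bands between them, which shows that the frontier decomposes into cells of strictly smaller dimension. Combined with (ii), this yields $\dim(\overline{X} \setminus X) < \dim X$. The ``in particular'' assertion then follows by applying (ii) to $\overline{X} = X \cup (\overline{X} \setminus X)$.

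I expect the frontier estimate for a single cell, used in (i), to be the main obstacle, since it is the one point requiring genuine inductive work on the cell structure; once that is in place, the remaining content is an organizational matter of arranging compatible cell decompositions and applying them to each of $X$, $Y$, and the graph of $f$.
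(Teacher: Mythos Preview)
The paper does not prove this lemma: it is listed among the ``useful facts'' recalled from the standard o-minimal references \cite{Coste2000, Dries1996, Dries1998}, with no argument given. So there is no proof in the paper to compare against, only your sketch to assess on its own merits.

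Your treatments of (ii) and (i) are the standard ones and are fine as outlines; you correctly identify the frontier inequality for a single cell as the point requiring genuine induction on the ambient dimension.

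For (iii), however, your argument takes a wrong turn. You begin well by decomposing the graph of $f$ into cells, but then you project to the \emph{first} factor to get a partition of $X$ and try to bound $\dim f(C_i)$ via the rank of $df$. The sentence ``the rank of $df$ along $C_i$ is at most $d_i$ \ldots, so $\dim f(C_i) \leq d_i$'' is precisely the step that is not justified: you are invoking a differential-geometric notion of dimension to conclude something about the o-minimal (cell-decomposition) dimension, and the equivalence of these two notions is essentially what (iii) is meant to establish. The clean way is to stay with the graph: each cell $D$ of the graph has $\dim D \leq \dim X$ (the first projection is a definable bijection from the graph to $X$, hence dimension-preserving), and the image $f(X)$ is the projection of the graph to the \emph{second} factor. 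Since the projection of a cell in $\R^n\times\R^m$ to $\R^m$ is, by the recursive definition of cells, a cell of no greater dimension, (ii) then gives $\dim f(X)\leq\dim X$. In short, replace the rank argument by the purely combinatorial fact that coordinate projections of cells are cells of no greater dimension.
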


\begin{lemma}\label{DenseIsOpen} 
Let $X \subset \mathbb R^n$ be a definable set. The following statements are equivalent.
\begin{enumerate}[{\rm (i)}]
\item $X$ is nowhere dense in $\mathbb{R}^n$;
\item $X$ has measure zero;
\item $\dim X < n.$
\end{enumerate}
\end{lemma}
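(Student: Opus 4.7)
The plan is to establish the three implications (iii) $\Rightarrow$ (ii), (i) $\Leftrightarrow$ (iii), and (ii) $\Rightarrow$ (i), leaning almost entirely on the cell decomposition theorem together with Lemma~\ref{DimensionProposition1}. The central tool is that any nonempty definable set $X$ can be written as a finite disjoint union of definable $C^1$-cells, and $\dim X$ equals the maximum of the cell dimensions; in particular, a top-dimensional cell (dimension $n$) is an open subset of $\R^n$.

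For (i) $\Leftrightarrow$ (iii), I would argue as follows. If $\dim X < n$, then by Lemma~\ref{DimensionProposition1}(i) we have $\dim \overline{X} = \dim X < n$, so no cell in the decomposition of $\overline{X}$ is open; hence $\overline{X}$ has empty interior, i.e., $X$ is nowhere dense. Conversely, if $X$ is not contained in any proper-dimension stratum, that is $\dim X = n$, then the cell decomposition contains a cell of dimension $n$, which is open and nonempty, forcing $X$ (hence $\overline{X}$) to have nonempty interior, contradicting nowhere-density. This gives (i) $\Leftrightarrow$ (iii).

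For (iii) $\Rightarrow$ (ii), I decompose $X$ into finitely many definable $C^1$-cells of dimension $d < n$. Each such cell is a $C^1$-submanifold of $\R^n$ of dimension strictly less than $n$, which has Lebesgue measure zero by a standard parametrization argument (cover by countably many charts, each the image of a Lipschitz map from an open subset of $\R^d$, and the image of a set of $d$-dimensional measure zero under a Lipschitz map into $\R^n$ has $n$-dimensional measure zero). A finite union of null sets is null, so $X$ has measure zero.

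For (ii) $\Rightarrow$ (i), I proceed by contrapositive. Suppose $X$ is not nowhere dense; then $\overline{X}$ contains an open ball $\mathbb{B}_r(x)$. By Lemma~\ref{DimensionProposition1}(i), $\dim X = \dim \overline{X} \geq n$, so $\dim X = n$. Applying the cell decomposition again, $X$ contains a cell of dimension $n$, which is open and hence has strictly positive Lebesgue measure, so $X$ does not have measure zero. The main (only) technical point is the $C^1$-submanifold/Lipschitz-image measure-zero fact used in (iii) $\Rightarrow$ (ii); once that is granted, everything else is a direct application of the cell decomposition theorem and Lemma~\ref{DimensionProposition1}.
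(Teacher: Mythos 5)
The paper states Lemma~\ref{DenseIsOpen} as a standard background fact of o-minimal geometry and gives no proof (it is among the facts recalled from \cite{Coste2000, Dries1998}), so there is no in-paper argument to compare against; your proposal is a self-contained derivation from cell decomposition, and it is essentially the standard one and correct. One step is phrased too loosely: in the direction $\dim X < n \Rightarrow$ nowhere dense, you infer ``no cell in the decomposition of $\overline{X}$ is open, hence $\overline{X}$ has empty interior.'' A finite union of non-open sets can perfectly well have nonempty interior (two closed half-balls union to a ball), so the inference as written is not valid; what you need is either that each cell of dimension $<n$ is itself nowhere dense (true, since such a cell is definably homeomorphic to an open box of dimension $<n$, and a finite union of nowhere dense sets is nowhere dense), or, more cleanly, the monotonicity of dimension that follows from Lemma~\ref{DimensionProposition1}(ii): if $\overline{X}$ contained a ball $\mathbb{B}_r(x)$, then $\dim \overline{X} \ge \dim \mathbb{B}_r(x) = n$, contradicting $\dim \overline{X} = \dim X < n$. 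You in fact use exactly this monotonicity argument in your (ii)~$\Rightarrow$~(i) paragraph, so the ingredient is already present in your write-up; just apply it in the earlier direction as well. The remaining steps --- measure zero of lower-dimensional $C^1$-cells via Lipschitz parametrization, and positivity of measure of an open $n$-cell --- are fine.
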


\section{Regular points and values of continuous definable mappings} \label{Section3}

Let $\Omega$ be an open definable set in $\mathbb{R}^n,$ and let $f \colon \Omega \to \R^n$ be a continuous definable mapping. We denote by $B_f$ the set of points where $f$ is not of class $C^1$ (this is always a nowhere dense subset of $\Omega$ in view of Lemma~\ref{Lemma21}). In what follows, for any $x\in \Omega\setminus B_f$,  we denote by $df(x)$  the derivative of $f$ at $x$ and for any set $U \subset \Omega$,  by $df(U)$ we mean the set $\{df(x)\colon x\in U \setminus B_f\}.$ So the mapping 
$$df \colon \Omega \setminus B_f \to \mathbb{R}^{n\times n}, \quad x \mapsto df(x),$$ 
is well-defined. Observe that for each $x \in \Omega,$ we have $\B_\epsilon(x) \subset \Omega$ for some $\epsilon > 0$ and the function
$$(0, \epsilon) \to \mathbb{R}, \quad r \mapsto \inf_{A\in {\rm co } (df(\B_r(x)))}\sigma (A),$$
is decreasing. (Recall that $\sigma(A)$ denotes the co-norm of $A \in \mathbb{R}^{n \times n}.$) Hence, the value
$$\nu_f (x):=\lim_{r\to 0^+} \inf_{A\in {\rm co } (df(\B_r(x)))}\sigma (A)$$
is well-defined and nonnegative (possibly infinite). By definition, it is clear that $\nu_f(x) = \sigma(df(x))$ for all $x \in \Omega \setminus B_f.$
Furthermore, it is not hard to check that the function $\Omega\to\R\cup \{+\infty\}, x \mapsto \nu_f(x),$ is lower semicontinuous; as we shall not use this fact, we leave the proof as an exercise.

\begin{definition} 
A point $x\in \Omega$ is a {\em regular point} of $f$ if $\nu _f(x) > 0$, and $x$ is a {\em critical point} of $f$ if $\nu _f(x) = 0.$ 
An element $y \in \mathbb{R}^n$ is a {\em regular value} of $f$ if either $f^{-1}(y)$ is an empty set or all the points in $f^{-1}(y)$ are regular points, and  $y$ is a {\em critical value} of $f$ if the set $f^{-1}(y)$ contains at least a critical point of $f.$
\end{definition}

\begin{remark} 
(i) By definition, it is not hard to see that if the mapping $f$ is of class $C^1$ the above concepts coincide with the classical ones.

(ii) As the reader can see, the above definition of the function $\nu_f(\cdot)$ follows Clarke's idea of taking the convex hull of all Jacobian matrices. To see this, let $x \in \Omega$ be a point near which $f$ is Lipschitz. Clarke \cite{Clarke1990} defined the {\em generalized Jacobian} $Jf(x)$ as the convex hull of all matrices which are limits of Jacobian matrices $df(x')$ as $x' \to x$ with $f$ being differentiable at $x'.$ By definition, $x$ is a regular point of $f$ (i.e., $\nu_f(x) > 0$) if and only if every matrix in $Jf(x)$ is non-singular.
\end{remark}

\begin{example} \label{Example2}  
(i) Consider the continuous semi-algebraic mapping 
$$f \colon \R^2\to\R^2, \quad x:= (x_1, x_2) \mapsto f(x) := (\sqrt[3]{x_1},x_2).$$
For any $x \in \mathbb{R}^2$ with $x_1\neq 0$, we have
		$df(x)= 
		\begin{bmatrix}
		\frac{1}{3\sqrt[3]{x_1^2}} & 0 \\
		0 & 1 
		\end{bmatrix}.$
		One can check that $\nu_f(0, 0) = 1$ and thus, $(0, 0)$ is a regular point of $f$.  Since $f^{-1}(0,0)=\{(0,0)\}$, it follows that  $y=(0,0)$  is a regular value of $f$.
		
(ii) Consider the continuous semi-algebraic mapping 
$$f \colon \R^2\to\R^2, \quad x:= (x_1, x_2) \mapsto f(x) :=(\sqrt[3]{x_1}, \sqrt[3]{x_2}).$$
For any $x \in \mathbb{R}^2$ with $x_1\neq 0$ and $x_2\neq 0$, we have
		$df(x)= 
		\begin{bmatrix}
		\frac{1}{3\sqrt[3]{x_1^2}} & 0 \\
		0 & \frac{1}{3\sqrt[3]{x_2^2}} 
		\end{bmatrix}.$
				One can check that $\nu_f(0, 0) = +\infty$ and thus,  $(0, 0)$ is a regular point of $f.$    Since $f^{-1}(0,0)=\{(0,0)\}$, it follows that  $y=(0,0)$  is a regular value of $f$.
		
(iii) Consider the continuous semi-algebraic mapping 
$$f \colon \R^2\to\R^2, \quad x := (x_1, x_2) \mapsto f(x) := (\sqrt{|x_1|},x_2).$$ 
We have
		$df(x_1,x_2)=
		\begin{bmatrix}
		\frac{1}{2\sqrt{x_1}} & 0 \\
		0 & 1 
		\end{bmatrix}$ if 
		$x_1 > 0$ and 
	$	df(x_1,x_2)=
	\begin{bmatrix}	-\frac{1}{2\sqrt{-x_1}} & 0 \\
		0 & 1 
		\end{bmatrix}$ if  $x_1<0$.
Observe that for any $r>0$ and $(x_1,x_2) \in \B_r(0)$ with $x_1 \ne 0,$ the matrix 
$$\frac{1}{2} \big(df(x_1,x_2) + df(-x_1, x_2) \big)$$ 
is not invertible. Therefore, $\nu_f(0,0)=0$ and so $x=(0,0)$ is a critical point of $f$.
\end{example}

Although a given value $y$ need not be a regular value of $f,$ the following claim ensures that almost all values are regular; see also \cite{Ioffe2007} for related results.

\begin{theorem}[Sard theorem]
Let $\Omega\subset \R^n$ be an open definable set and $f \colon  \Omega\to \R^n$ be a continuous definable mapping. Then the set of critical values of $f$ is of dimension strictly less than $n,$ and so, it has measure zero.
\end{theorem}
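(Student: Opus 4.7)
The plan is to split the critical set $C_f := \{x \in \Omega : \nu_f(x) = 0\}$ according to whether $f$ is $C^1$ at the point. Put $V := \Omega \setminus B_f$; by Lemma~\ref{Lemma21}, $B_f$ is definable and nowhere dense in $\Omega$, so $V$ is open and dense, $f|_V$ is of class $C^1$, and on $V$ the equality $\nu_f(x) = \sigma(df(x))$ holds (as noted just before the definition of critical point). Writing $f(C_f) = f(C_f \cap V) \cup f(C_f \cap B_f)$, it then suffices, via Lemmas~\ref{DimensionProposition1}(ii) and~\ref{DenseIsOpen}, to bound the dimension of each piece by $n - 1$.

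For the piece coming from $B_f$: since $B_f$ is definable and nowhere dense, Lemma~\ref{DenseIsOpen} gives $\dim B_f < n$, and Lemma~\ref{DimensionProposition1}(iii) applied to the definable mapping $f$ on $B_f$ yields $\dim f(C_f \cap B_f) \leq \dim f(B_f) \leq \dim B_f < n$. Notice that this step does not even require knowing that $C_f \cap B_f$ itself is definable.

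For the piece coming from $V$: the set $C_f \cap V$ coincides with $\{x \in V : \det df(x) = 0\}$, the classical critical set of the $C^1$ mapping $f|_V \colon V \to \R^n$. The classical Sard theorem, which in the equidimensional case requires only $C^1$ regularity, guarantees that $f(C_f \cap V)$ has Lebesgue measure zero. Because $C_f \cap V$ is definable (it is cut out by the definable equation $\det df = 0$ on the definable open set $V$), the image $f(C_f \cap V)$ is definable, and Lemma~\ref{DenseIsOpen} upgrades the measure-zero statement to $\dim f(C_f \cap V) < n$.

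Combining the two bounds via Lemma~\ref{DimensionProposition1}(ii) yields $\dim f(C_f) < n$, and a final application of Lemma~\ref{DenseIsOpen} delivers the measure-zero conclusion. There is no real obstacle; the only delicate point is the invocation of classical Sard on the $C^1$ subdomain $V$, which is legitimate because $V$ is open in $\R^n$ and $f|_V$ is everywhere $C^1$. The whole argument uses the definable machinery only to convert between ``measure zero'', ``dimension $<n$'', and ``nowhere dense'' through Lemma~\ref{DenseIsOpen}, and to discard the non-smooth locus $B_f$ cheaply via Lemma~\ref{DimensionProposition1}.
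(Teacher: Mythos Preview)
Your proof is correct and follows essentially the same approach as the paper's: split off the non-$C^1$ locus $B_f$, bound its image via $\dim f(B_f)\le\dim B_f<n$, and handle the remaining open $C^1$ piece by the classical Sard theorem together with Lemma~\ref{DenseIsOpen}. The only cosmetic difference is that the paper writes $\overline{B_f}$ in place of $B_f$, but since being of class $C^1$ at a point is an open condition, $B_f$ is already closed in $\Omega$ and the two formulations coincide.
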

\begin{proof}
In view of Lemma~\ref{Lemma21}, $B_f$ is a definable nowhere dense subset of $\Omega.$ This, together with Lemmas~\ref{DimensionProposition1}~and~\ref{DenseIsOpen}, implies that 
$$\dim f(\overline{B_f}) \le \dim \overline{B_f} = \dim B_f < \dim \Omega = n.$$

On the other hand, since the restriction of $f$ on $\Omega \setminus \overline{B_f}$ is of class $C^1,$ the classical Sard theorem ensures that the set $f(\Sigma_f)$ has measure zero in $\mathbb{R}^n,$ where $\Sigma_f := \{x \in \Omega \setminus \overline{B_f} : \det df(x) = 0 \}.$ Since $f$ is definable, $\Sigma_f,$ and hence $f(\Sigma_f),$ are definable. According to Lemma~\ref{DenseIsOpen}, then $\dim f(\Sigma_f) < n.$  Therefore, by Lemma~\ref{DimensionProposition1}, 
$$\dim ( f(\overline{B_f}) \cup f(\Sigma_f))= \max \{\dim  f(\overline{B_f}), \dim f(\Sigma_f)\}<n.$$
Finally, since  the set of critical values of $f$ is a subset of $f(\overline{B_f}) \cup f(\Sigma_f)$,  the desired conclusion follows.
\end{proof}

Let us establish some properties of regular points that will be used in the calculation of degree for continuous definable mappings.

\begin{lemma} \label{Claim31}
$\overline{x} \in \Omega$ is a regular point of $f$ if and only if there exist scalars $r > 0$ and   $\delta > 0$ such that the following inequalities hold
\begin{eqnarray*}
\sigma(A) &\ge& \delta \quad \textrm{ for all } \quad A \in \mathrm{co} (df(\B_r(\overline{x}))), \\
\nu_f(x) &\ge& \delta \quad \textrm{ for all } \quad x \in \B_r(\overline{x}).
\end{eqnarray*}
In particular, the set of regular points of $f$ is an open subset of $\Omega.$
\end{lemma}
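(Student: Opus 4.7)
The plan is to leverage the monotonicity observation already recorded just before the definition of $\nu_f$: the function $r \mapsto \inf_{A\in \mathrm{co}(df(\B_r(\overline{x})))}\sigma(A)$ is decreasing in $r$, so $\nu_f(\overline{x})$ is in fact the supremum of these infima over small $r > 0$. Both implications then reduce to simple inclusion-of-balls arguments, and openness falls out as a free corollary of the second inequality.

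For the $(\Leftarrow)$ direction, assume $r$ and $\delta$ as in the statement. For every $s \in (0, r)$ we have $\B_s(\overline{x}) \subset \B_r(\overline{x})$, hence
$$\mathrm{co}(df(\B_s(\overline{x}))) \subset \mathrm{co}(df(\B_r(\overline{x}))),$$
so the infimum of $\sigma$ over the smaller set is at least $\delta$; letting $s \to 0^+$ gives $\nu_f(\overline{x}) \geq \delta > 0$, so $\overline{x}$ is regular.

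For the $(\Rightarrow)$ direction, I would first pick $\delta$: take $\delta := \nu_f(\overline{x})/2$ if $\nu_f(\overline{x}) < +\infty$, or any positive number otherwise. Since the infimum function is decreasing with limit $\nu_f(\overline{x})$ at $0^+$, there is some $r > 0$ with $\inf_{A\in \mathrm{co}(df(\B_r(\overline{x})))}\sigma(A) \geq \delta$, which is the first inequality. For the second, fix $x \in \B_r(\overline{x})$ and set $r' := r - \|x - \overline{x}\| > 0$, so that $\B_{r'}(x) \subset \B_r(\overline{x})$. Then for every $s \in (0, r')$,
$$\mathrm{co}(df(\B_s(x))) \subset \mathrm{co}(df(\B_{r'}(x))) \subset \mathrm{co}(df(\B_r(\overline{x}))),$$
so $\inf_{A\in \mathrm{co}(df(\B_s(x)))}\sigma(A) \geq \delta$; passing to the limit $s \to 0^+$ gives $\nu_f(x) \geq \delta$.

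The final assertion is immediate: the second inequality exhibits $\B_r(\overline{x})$ as a neighbourhood of $\overline{x}$ consisting entirely of regular points, so the regular set is open in $\Omega$. I do not anticipate any real obstacle here; the only small subtlety is to treat the case $\nu_f(\overline{x}) = +\infty$ cleanly, which is handled uniformly by the choice of $\delta$ above.
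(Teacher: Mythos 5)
Your proof is correct and follows essentially the same route as the paper's: the ``if'' direction and the second inequality both come from the monotonicity of $r \mapsto \inf_{A\in \mathrm{co}(df(\B_r(\overline{x})))}\sigma(A)$ under inclusion of balls, and the ``only if'' direction picks $\delta = \tfrac{1}{2}\nu_f(\overline{x})$ (with the obvious modification when $\nu_f(\overline{x}) = +\infty$), exactly as in the paper. Your treatment of the infinite case is in fact slightly more explicit than the paper's ``proved similarly.''
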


\begin{proof} 
It suffices to prove the ``only if" part. 
We only consider the case $\nu_f(\overline{x}) < +\infty.$ The case where $\nu_f(\overline{x}) = +\infty$ is proved similarly.
	
By definition, there exist $r > 0$ and $\delta > 0$ such that
$$\inf_{A \in \mathrm{co} (df(\B_r(\overline{x}))) }\sigma(A)  \ge \frac{1}{2}\nu_f(\overline{x}).$$
Letting $\delta := \frac{1}{2}\nu_f(\overline{x}) > 0,$ we get the first inequality.
	
Take any $x \in \B_r(\overline{x}).$ For all $\rho > 0$ sufficiently small, we have $\B_{\rho} (x) \subset \B_r(\overline{x}),$ and so
$$\inf_{A \in \mathrm{co} (df( \B_{\rho} (x))) }\sigma(A)  \ge \inf_{A \in \mathrm{co} (df(\B_r(\overline{x}))) }\sigma(A) \ge \delta.$$
Letting $\rho \to 0^+,$ we get the second inequality.
\end{proof}

\begin{lemma}\label{Claim32}
If $\overline{x} \in \Omega$ is a regular point of $f,$ then there exists a scalar $r > 0$ such that $\det A$ is negative (or positive) for all $A \in \mathrm{co} (df(\B_r(\overline{x}))).$
\end{lemma}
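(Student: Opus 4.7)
The plan is to combine Lemma~\ref{Claim31} with a connectedness argument. First, I would invoke Lemma~\ref{Claim31} to obtain scalars $r > 0$ and $\delta > 0$ such that $\sigma(A) \ge \delta$ for every $A \in \mathrm{co}(df(\B_r(\overline{x})))$. Since the co-norm $\sigma(A)$ is strictly positive if and only if $A$ is surjective, and since $A$ is a square matrix, this forces $\det A \ne 0$ for every $A$ in the convex hull.

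Next, I would shrink $r$ if necessary so that $\B_r(\overline{x}) \subset \Omega$, and observe that $\mathrm{co}(df(\B_r(\overline{x})))$ is nonempty: by Lemma~\ref{Lemma21} the exceptional set $B_f$ is nowhere dense in $\Omega$, so $\B_r(\overline{x}) \setminus B_f$ is nonempty and hence $df(\B_r(\overline{x}))$ is a nonempty subset of $\mathbb{R}^{n\times n}$. Being a convex hull, the set $\mathrm{co}(df(\B_r(\overline{x})))$ is convex, in particular connected.

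Finally, since the determinant function $\det \colon \mathbb{R}^{n\times n} \to \mathbb{R}$ is continuous and does not vanish on the connected set $\mathrm{co}(df(\B_r(\overline{x})))$, its image is a connected subset of $\mathbb{R}\setminus\{0\}$, which must lie entirely in $(-\infty, 0)$ or in $(0, +\infty)$. This yields the dichotomy claimed in the statement.

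The argument is essentially routine once Lemma~\ref{Claim31} is available; the only mildly delicate point is ensuring that $\mathrm{co}(df(\B_r(\overline{x})))$ is nonempty, which is handled by the nowhere-density of $B_f$. I do not anticipate a serious obstacle.
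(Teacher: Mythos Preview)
Your proposal is correct and follows essentially the same approach as the paper: invoke Lemma~\ref{Claim31} to get $\det A \ne 0$ on $\mathrm{co}(df(\B_r(\overline{x})))$, then exploit convexity to force a constant sign. The paper phrases the second step as an explicit intermediate-value-theorem argument on the segment $t \mapsto (1-t)A_1 + tA_2$ between two hypothetical matrices of opposite-sign determinant, which is just the concrete instantiation of your connectedness argument.
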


\begin{proof} 
By Lemma~\ref{Claim31}, there exist scalars $r > 0$ and  $\delta > 0$ such that for each $A \in \mathrm{co} (df(\B_r(\overline{x}))),$ we have $\sigma(A) \ge \delta$ and so $\det A \ne 0.$ We show that the function 
$$\mathbb{R}^{n \times n} \to \mathbb{R}, \quad A \mapsto \det A,$$ 
has a constant sign on the set $\mathrm{co} (df(\B_r(\overline{x}))).$ By contradiction, suppose that there exist $A_1$, $A_2 \in \mathrm{co}(df(\B_r(\overline{x})))$ such that 
$$\det A_1 < 0 < \det A_2.$$ 
Consider the function $g \colon [0, 1] \to \R$ given by 
$$g(t)={\rm det}((1 - t)A_1 + tA_2).$$
Clearly, $g$ is continuous and $g(0) < 0 < g(1).$ Hence we must have $g(t) = 0$ for some $t \in (0, 1).$ This is a contradiction, because we know that $(1 - t)A_1 + tA_2 \in \mathrm{co} (df(\B_r(\overline{x})))$ and ${\rm det}((1 - t)A_1 + tA_2) \neq 0.$
\end{proof}

\section{The mean value theorem, the inverse function theorem and the implicit function theorem for continuous definable mappings} \label{Section4}

In this section we present some classical analysis results for continuous definable mappings. The first one is as follows (see also \cite{Lebourg1975, Pourciau1977}).

\begin{theorem}[Mean value theorem] \label{MeanValueTheorem}  
Assume that  $\Omega\subseteq \R^n$ is an open definable set and that $f \colon  \Omega\to \R^n$ is  a continuous definable mapping.
Let $x \in \Omega.$ There exists a dense and definable subset $\Omega(x)$ of $\Omega$ such that for each $x' \in \Omega(x)$ with $[x, x'] \subset \Omega$ we have
$$f(x') - f(x) \in \mathrm{\overline{co}} \{d f [x, x'] (x' - x)\},$$
where   $d f [x, x'] (x' - x) := \{ A(x' - x) : A \in df([x, x']) \}.$
\end{theorem}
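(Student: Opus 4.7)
The plan is to pick $x'$ so that the segment $[x,x']$ meets the singular set $B_f$ in only finitely many points, reduce to the classical vector-valued mean value inequality on each smooth sub-piece, and then sum up using convexity.

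First, I define
\[
\Omega(x) := \{x' \in \Omega : [x,x'] \cap B_f \text{ is finite}\}.
\]
By Lemma~\ref{Lemma21}, $B_f$ is a definable nowhere dense subset of $\Omega$, and since $\Omega$ is open this forces $B_f$ to be nowhere dense in $\R^n$ as well. The Good Directions Lemma (Lemma~\ref{Koopman-Brown}) applied to $B_f$ then yields that the set of $x' \in \R^n$ for which $[x,x']\cap B_f$ is finite is dense and definable in $\R^n$; intersecting with the open definable set $\Omega$ preserves both properties, so $\Omega(x)$ is dense and definable in $\Omega$.

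Next, I fix $x' \in \Omega(x)$ with $[x,x'] \subset \Omega$ and set $\gamma(t) := (1-t)x + tx'$, $h := f \circ \gamma \colon [0,1] \to \R^n$. The set $\{t \in [0,1] : \gamma(t) \in B_f\}$ is finite by choice of $x'$, so together with the endpoints it can be listed as $0 = t_0 < t_1 < \cdots < t_k = 1$. On each open interval $(t_{i-1}, t_i)$ the curve $\gamma(t)$ avoids $B_f$, so $f$ is $C^1$ on a neighbourhood of $\gamma(t)$ and, by the chain rule, $h$ is $C^1$ there with $h'(t) = df(\gamma(t))(x'-x)$; moreover $h$ remains continuous on the closed interval $[t_{i-1}, t_i]$. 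The standard Banach-space mean value inequality (proved by a Hahn--Banach argument reducing to the scalar MVT) then yields, for each $i$,
\[
h(t_i) - h(t_{i-1}) \in (t_i - t_{i-1})\, \overline{\mathrm{co}}\{df[x,x'](x'-x)\},
\]
since $h'(t) = df(\gamma(t))(x'-x) \in df[x,x'](x'-x)$ throughout $(t_{i-1}, t_i)$.

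To conclude, I write each increment as $h(t_i) - h(t_{i-1}) = (t_i - t_{i-1})\, c_i$ with $c_i \in \overline{\mathrm{co}}\{df[x,x'](x'-x)\}$. Telescoping and using $\sum_{i=1}^k (t_i - t_{i-1}) = 1$ together with convexity gives
\[
f(x') - f(x) = h(1) - h(0) = \sum_{i=1}^k (t_i - t_{i-1})\, c_i \in \overline{\mathrm{co}}\{df[x,x'](x'-x)\},
\]
which is the claim. The only step where o-minimality is essential is the density of $\Omega(x)$, supplied by the Good Directions Lemma; everything else is a piecewise application of classical vector-valued calculus, made possible by the fact that a definable nowhere dense set can be crossed in only finitely many points along almost every direction.
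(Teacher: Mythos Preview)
Your proof is correct and follows the same strategy as the paper: define $\Omega(x)$ via the Good Directions Lemma applied to the nowhere dense set $B_f$, then work along the segment $[x,x']$, which meets $B_f$ in only finitely many points, so that $f\circ\gamma$ is piecewise $C^1$. The only difference is cosmetic---the paper argues by contradiction, separating $f(x')-f(x)$ from the closed convex set with a linear functional $u$ and integrating the scalar function $t\mapsto\langle u,f(\gamma(t))\rangle$, whereas you invoke the vector-valued mean value inequality on each smooth sub-interval and telescope; these are two standard packagings of the same Hahn--Banach/scalar-MVT reduction.
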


\begin{proof}
Since $B_f$ is a nowhere dense subset of $\Omega,$ it follows from Lemma~\ref{Koopman-Brown} that the set 
$$\Omega(x) := \{x' \in \R^n : [x, x'] \cap B_f \textrm{ is finite} \}$$ 
is dense and definable in $\Omega.$ Assume that the desired conclusion does not hold at some point $x' \in \Omega(x)$ with $[x, x'] \subset \Omega.$ Since $\mathrm{\overline{co}} \{ d f [x, x'] (x' - x)\}$ is a closed  convex subset of $\mathbb{R}^n,$ it follows from the separation theorem that
$$\langle u, f(x') - f(x) \rangle - \epsilon > \langle u, v \rangle \quad \textrm{ for all } \quad v  \in \mathrm{\overline{co}} \{ d f [x, x'] (x' - x)\}$$
for some $u \in \mathbb{R}^n$ and $\epsilon > 0.$ Consider the function 
$$g \colon [0, 1] \to \mathbb{R}, \quad t \mapsto g(t) := \langle u, f((1 - t)x + tx') \rangle.$$
For almost all (in fact, all but finite) $t \in [0, 1],$ the function $f$ is differentiable at $(1 - t)x + tx'.$ For such $t$, we have
$$g'(t) = \langle u, df((1 - t)x + tx')(x' - x) \rangle < \langle u, f(x') - f(x) \rangle - \epsilon.$$
Hence
$$g(1) - g(0) = \int_0^1 g'(t) dt < \langle u, f(x') - f(x) \rangle - \epsilon,$$
which is impossible because $g(1) - g(0) = \langle u, f(x') - f(x) \rangle.$
\end{proof}

\begin{remark}
(i) We cannot drop $\mathrm{\overline{co}}$ in the conclusion of Theorem~\ref{MeanValueTheorem}. Indeed, consider the continuous semi-algebraic function $f \colon  \R\to\R, x \mapsto -|x|.$ A simple calculation shows that
$$f(-1)-f(1)=0\notin d f [-1, 1] (-1-1)=\{-2,2\}.$$

(ii) In Theorem~\ref{MeanValueTheorem}, the set $\Omega(x)$ may not equal to $\Omega.$ Indeed, consider the continuous semi-algebraic function $f \colon \Omega \to \R^2, (x_1, x_2) \mapsto (x_1, \sqrt[3]{x_2}),$ where $\Omega := \R^2.$ Clearly, $B_f = \{(x_1, x_2) \in \mathbb{R}^2 : x_2 = 0\}.$ Moreover, for $x := (0, 0)$ we have $\Omega (x) \subseteq (\R^2 \setminus B_f) \cup \{(0,0)\},$ and so $\Omega(x) \ne \Omega.$
\end{remark}

The following fact plays a crucial role in the proof of Theorems~\ref{InverseTheorem} and \ref{ImplicitTheorem} below.

\begin{corollary}\label{MeanValueTheorem2}
Assume that  $\Omega\subseteq \R^n$ is an open definable set and that $f \colon  \Omega\to \R^n$ is  a continuous definable mapping.
Let $\overline{x} \in \Omega$ and $r > 0$ be such that $\B_r(\overline{x}) \subset \Omega.$ Then 
\begin{eqnarray*}
\|f(x') - f(x) \| & \ge & \inf_{A \in \mathrm{co} (df(\B_r(\overline{x})))} \| A(x' - x)\|
\end{eqnarray*}
for all $x, x' \in \B_r(\overline{x}).$
\end{corollary}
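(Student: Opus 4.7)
My plan is to combine Theorem~\ref{MeanValueTheorem} (applied with base point $x$) with an approximation argument: first establish the inequality for $x'$ in the dense definable subset $\Omega(x) \cap \B_r(\overline{x})$ provided by the mean value theorem, and then extend to all of $\B_r(\overline{x})$ by density. The algebraic bridge is the observation that the map $A \mapsto A(x'-x)$ is linear, so it commutes with the formation of convex hulls, together with the fact that continuity of the norm leaves the infimum unchanged when the set is replaced by its closure.

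For the first step, fix $x \in \B_r(\overline{x})$. Theorem~\ref{MeanValueTheorem} produces a dense definable set $\Omega(x) \subset \Omega$ such that
$$f(x') - f(x) \in \overline{\mathrm{co}}\{df[x, x'](x' - x)\}$$
for every $x' \in \Omega(x)$ with $[x, x'] \subset \Omega$. When $x' \in \B_r(\overline{x})$, convexity of the ball gives $[x, x'] \subset \B_r(\overline{x})$, hence $df([x, x']) \subseteq df(\B_r(\overline{x}))$. By the linearity remark, $\mathrm{co}\{df[x, x'](x'-x)\}$ equals $\{A(x'-x) : A \in \mathrm{co}(df([x, x']))\}$, which lies inside $\{A(x'-x) : A \in \mathrm{co}(df(\B_r(\overline{x})))\}$. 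Passing to closures I conclude
$$f(x') - f(x) \in \overline{\{A(x' - x) : A \in \mathrm{co}(df(\B_r(\overline{x})))\}},$$
and since the continuous function $v \mapsto \|v\|$ has the same infimum on a set and on its closure, this yields
$$\|f(x') - f(x)\| \geq \inf_{A \in \mathrm{co}(df(\B_r(\overline{x})))} \|A(x' - x)\|$$
for every $x' \in \Omega(x) \cap \B_r(\overline{x})$.

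For a general $x' \in \B_r(\overline{x})$, I would approximate by $x'_k \in \Omega(x) \cap \B_r(\overline{x})$ with $x'_k \to x'$ (possible by density of $\Omega(x)$); continuity of $f$ then gives convergence of the left-hand side. The main obstacle is passing to the limit on the right, because $y \mapsto \inf_{A \in \mathrm{co}(df(\B_r(\overline{x})))} \|Ay\|$ is only upper semicontinuous as an infimum of continuous functions. To circumvent this I would select $A_k \in \mathrm{co}(df(\B_r(\overline{x})))$ with $\|A_k(x'_k - x)\| \le \|f(x'_k) - f(x)\| + 1/k$ and track the behaviour of $\{A_k\}$: if it is bounded, any subsequential limit $A_* \in \overline{\mathrm{co}}(df(\B_r(\overline{x})))$ satisfies $\|A_*(x' - x)\| \le \|f(x') - f(x)\|$, and continuity of $A \mapsto \|A(x'-x)\|$ absorbs the closure into the infimum. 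The genuinely delicate case is the unbounded one, which can occur because $df$ may blow up near $B_f$ (see Example~\ref{Example2}(ii)); here the rescalings $B_k := A_k/\|A_k\|$ yield a unit-norm limit $B_*$ in the recession cone of $\overline{\mathrm{co}}(df(\B_r(\overline{x})))$ satisfying $B_*(x' - x) = 0$, and a diagonal refinement of the approximating sequence $x'_k \to x'$ arranged so that $\|A_k\|\cdot\|x'_k - x'\| \to 0$ reduces this case to the bounded one. Managing this coupling between the definable approximation and the blow-up of the Jacobians is the most delicate point of the proof.
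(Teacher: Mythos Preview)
Your approach is exactly the paper's: apply Theorem~\ref{MeanValueTheorem} at the fixed base point $x$ to obtain the inequality on the dense set $\Omega(x)\cap\B_r(\overline{x})$, then extend to every $x'$ by density. The paper dispatches this last step with ``clearly, this inequality still holds for all $x'\in\B_r(\overline{x})$ because of the density of $\Omega(x)$,'' so you are right to flag it: the map $y\mapsto\inf_{A\in S}\|Ay\|$ with $S=\mathrm{co}(df(\B_r(\overline{x})))$ is in general only upper semicontinuous when $S$ is unbounded (as in Example~\ref{Example2}(ii)), and an inequality of the form $(\text{continuous})\ge(\text{u.s.c.})$ does not automatically pass to limits from a dense set.

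Your proposed rescue of the unbounded case, however, does not work as written. Once $x'_k\to x'$ is chosen and $A_k\in S$ is selected as a near-minimizer for $\|A(x'_k-x)\|$, the norms $\|A_k\|$ are determined; going back to ``refine $x'_k$ so that $\|A_k\|\cdot\|x'_k-x'\|\to 0$'' also changes the $A_k$, so the diagonal argument is circular, and the conclusion $B_*(x'-x)=0$ for a unit $B_*$ in the recession cone of $\overline{S}$ does not by itself bound $\inf_{A\in S}\|A(x'-x)\|$. A cleaner repair avoids semicontinuity altogether: fix $v:=x'-x$ and translate \emph{both} endpoints by the same small $w$. Since $\dim B_f<n$, elementary definable fiber-dimension considerations show that for a dense set of $w$ the segment $[x+w,\,x'+w]$ meets $B_f$ only finitely often; for such $w$ the mean value argument gives $\|f(x'+w)-f(x+w)\|\ge\inf_{A\in S}\|Av\|$, and now the right-hand side is independent of $w$, so letting $w\to 0$ and using only the continuity of $f$ yields the claim.
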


\begin{proof}
Let $x \in \B_r(\overline{x}).$ By Theorem~\ref{MeanValueTheorem}, there exists a dense subset $\Omega(x)$ of $\Omega$ such that for each $x' \in \Omega(x)$ with $[x, x'] \subset \Omega$ we have
$$f(x') - f(x) \in \mathrm{\overline{co}} \{ d f [x, x'] (x' - x)\}.$$
Take any $x' \in \B_r(\overline{x}) \cap \Omega(x).$ Then $[x, x'] \subset \B_r(\overline{x}) \subset \Omega.$ Hence
\begin{eqnarray*}
f(x') - f(x) 
& \in & \mathrm{\overline{co}} \{ d f [x, x'] (x' - x)\} \\
& \subset & \mathrm{\overline{co}} \{ df (\B_r(\overline{x})) (x' - x)\}.
\end{eqnarray*}
Consequently,
\begin{eqnarray*}
\|f(x') - f(x) \|
& \ge & \inf_{A \in \mathrm{co} (df(\B_r(\overline{x})))} \| A(x' - x)\|.
\end{eqnarray*}
Clearly, this inequality still holds for all $x' \in \B_r(\overline{x})$ because of the density of $\Omega(x)$ in $\Omega.$ 
\end{proof}

For further reference we recall here the classical invariance of domain theorem.

\begin{lemma} [Invariance of domain] \label{InvarianceDomain}
Let $\Omega$ be an open subset of $\mathbb{R}^n.$  Then every injective continuous mapping $f \colon \Omega \to \R^n$ is open, i.e. the image of an open subset of $\Omega$ under $f$ is an open set in $\mathbb{R}^n.$
 \end{lemma}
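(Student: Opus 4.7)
The plan is to show that for every $x_0 \in \Omega$ the image $f(x_0)$ is interior to $f(\Omega)$; the same argument applied to $f$ restricted to any open subset of $\Omega$ then yields the full statement. I begin by picking $r > 0$ with $\overline{\B_r(x_0)} \subset \Omega$. Because $f$ is a continuous injection and $\overline{\B_r(x_0)}$ is compact, $f$ restricts to a homeomorphism of $\overline{\B_r(x_0)}$ onto its image; in particular $f(\partial \B_r(x_0))$ is compact and does not contain $f(x_0)$, so
$$d \;:=\; \mathrm{dist}\bigl(f(x_0),\, f(\partial \B_r(x_0))\bigr) \;>\; 0.$$

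The core of the plan is to invoke the Brouwer degree $\deg(f, \B_r(x_0), y)$, which is well-defined for $y \notin f(\partial \B_r(x_0))$ and is constant on connected components of $\R^n \setminus f(\partial \B_r(x_0))$ by homotopy invariance. In particular, for every $y \in \B_d(f(x_0))$ one has
$$\deg(f, \B_r(x_0), y) \;=\; \deg(f, \B_r(x_0), f(x_0)).$$
Because $f|_{\overline{\B_r(x_0)}}$ is a topological embedding, this common value is $\pm 1$; this is the classical fact that a homeomorphism has local degree $\pm 1$, a consequence either of the Jordan--Brouwer separation theorem applied to the topological $(n-1)$-sphere $f(\partial \B_r(x_0))$, or of the excision isomorphism $H_n(\R^n, \R^n \setminus \{f(x_0)\}) \cong \Z$ together with the induced isomorphism on local homology. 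The solvability property of the degree then forces $y \in f(\B_r(x_0))$ for every such $y$, so $\B_d(f(x_0)) \subset f(\B_r(x_0)) \subset f(\Omega)$, which is what was required.

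The main obstacle is precisely the nonvanishing of the local degree of an injective continuous mapping. This is the single genuinely topological ingredient of the proof and cannot be sidestepped by elementary means; everything else reduces to standard bookkeeping with the Brouwer degree and routine use of compactness and injectivity.
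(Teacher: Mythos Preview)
The paper does not actually prove this lemma; it simply cites \cite[Chapter~4, Proposition~7.4]{Dold1972}. Your sketch, by contrast, outlines one of the standard routes to invariance of domain via the Brouwer degree, and the bookkeeping steps (reducing to a closed ball, bounding the distance to the image of the boundary sphere, using local constancy and the solvability property of the degree) are all fine.

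You have also correctly isolated the one genuinely nontrivial point: that an injective continuous map of a closed ball into $\R^n$ has local degree $\pm 1$ at the image of an interior point. Be aware, though, that in many treatments this fact is \emph{derived from} invariance of domain rather than the other way around, so one must be careful about circularity. Your appeal to the Jordan--Brouwer separation theorem is the right way to break the circle: Jordan--Brouwer (for a topological $(n-1)$-sphere in $S^n$) can be proved independently via Alexander duality or a Mayer--Vietoris induction, and once you know that $\R^n \setminus f(\partial \B_r(x_0))$ has exactly two components, with $f(x_0)$ in the bounded one, the degree computation $\deg(f, \B_r(x_0), f(x_0)) = \pm 1$ follows. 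The alternative you mention---computing the induced map on local homology $H_n(\R^n, \R^n \setminus \{f(x_0)\})$ directly---likewise requires knowing that $f$ sends a neighbourhood of $x_0$ homeomorphically onto a \emph{neighbourhood} of $f(x_0)$, which is again invariance of domain; so of your two suggested justifications, only the Jordan--Brouwer one is non-circular as stated. In short: your argument is correct provided the degree-$\pm 1$ step is grounded in Jordan--Brouwer (or an equivalent independent result), and the paper's ``proof'' is merely a citation to the same circle of ideas.
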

 \begin{proof}
 See, for example, \cite[Chapter~4, Proposition~7.4]{Dold1972}.
 \end{proof}
 
The following result is inspired by the work of Fukui, Kurdyka and P\u{a}unescu \cite{Fukui2009}.

\begin{theorem}[Inverse function theorem]  \label{InverseTheorem}  
Assume that  $\Omega\subseteq \R^n$ is an open definable set and that $f \colon  \Omega\to \R^n$ is  a continuous definable mapping.
 Let $\overline{x} \in \Omega$ be a regular point of $f.$ Then there exist scalars $r>0$ and   $\delta > 0$ such that
$$\|f(x') - f(x)\|\geq \delta \|x'-x\| \quad {\rm for\ all} \quad  x, x'\in \B_r(\overline{x}).$$
In particular, $f(\B_r(\overline{x}))$ is an open set in $\mathbb{R}^n$ and the restriction mapping $f \colon \B_r(\overline{x})\to f(\B_r(\overline{x}))$ is a homeomorphism with Lipschitz definable inverse.
\end{theorem}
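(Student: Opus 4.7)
The plan is to chain together Lemma~\ref{Claim31} (which upgrades the regularity of $\overline{x}$ into a uniform co-norm bound on a ball) with Corollary~\ref{MeanValueTheorem2} (which converts the co-norm bound into a lower Lipschitz estimate for $f$ itself). The injectivity and the homeomorphism property then fall out of the invariance of domain, Lemma~\ref{InvarianceDomain}, and the Lipschitz property of the inverse is essentially a re-writing of the lower Lipschitz bound.

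In detail, first I would apply Lemma~\ref{Claim31} at $\overline{x}$. This produces $r>0$ and $\delta>0$, which after shrinking if necessary we may assume satisfy $\B_r(\overline{x})\subset\Omega$, such that
$$\sigma(A)\;\ge\;\delta \qquad \text{for every } A\in \mathrm{co}\bigl(df(\B_r(\overline{x}))\bigr).$$
By the very definition of the co-norm, $\|Au\|\ge \sigma(A)\|u\|$ for every $u\in\mathbb{R}^n$, so
$$\inf_{A\in\mathrm{co}(df(\B_r(\overline{x})))}\|A(x'-x)\|\;\ge\;\delta\,\|x'-x\| \qquad \text{for all } x,x'\in\mathbb{R}^n.$$
Now I would invoke Corollary~\ref{MeanValueTheorem2} on the ball $\B_r(\overline{x})$ to obtain, for all $x,x'\in \B_r(\overline{x})$,
$$\|f(x')-f(x)\|\;\ge\;\inf_{A\in\mathrm{co}(df(\B_r(\overline{x})))}\|A(x'-x)\|\;\ge\;\delta\,\|x'-x\|,$$
which is the desired lower Lipschitz estimate.

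From this estimate $f$ is obviously injective on $\B_r(\overline{x})$, so Lemma~\ref{InvarianceDomain} applied to the continuous injective mapping $f|_{\B_r(\overline{x})}$ shows that $f(\B_r(\overline{x}))$ is open in $\mathbb{R}^n$ and that the restriction $f\colon \B_r(\overline{x})\to f(\B_r(\overline{x}))$ is a homeomorphism. Setting $g:=f^{-1}\colon f(\B_r(\overline{x}))\to \B_r(\overline{x})$, the lower bound above rewritten for $y=f(x)$, $y'=f(x')$ gives $\|g(y')-g(y)\|\le \tfrac{1}{\delta}\|y'-y\|$, so $g$ is Lipschitz with constant $1/\delta$. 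Definability of $g$ is automatic: its graph is the image of the graph of $f|_{\B_r(\overline{x})}$ under the coordinate swap $(x,y)\mapsto(y,x)$, a definable operation on definable sets.

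I do not expect any serious obstacle here: every ingredient has been prepared in the previous sections, and the proof is essentially a concatenation of Lemmas~\ref{Claim31} and \ref{InvarianceDomain} with Corollary~\ref{MeanValueTheorem2}. The only point that deserves a moment's attention is that the co-norm bound on $\mathrm{co}(df(\B_r(\overline{x})))$ must be used before passing to the infimum in Corollary~\ref{MeanValueTheorem2} (rather than after), since one needs the uniform bound $\sigma(A)\ge\delta$ to hold for \emph{every} element of the convex hull in order to factor the $\|x'-x\|$ out of the infimum.
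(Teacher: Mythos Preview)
Your proof is correct and follows essentially the same route as the paper's: Lemma~\ref{Claim31} for the uniform co-norm bound, Corollary~\ref{MeanValueTheorem2} for the lower Lipschitz estimate, and Lemma~\ref{InvarianceDomain} for openness and the homeomorphism conclusion. Your write-up is in fact slightly more explicit than the paper's in spelling out the Lipschitz constant $1/\delta$ for the inverse and the reason for its definability.
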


\begin{proof}
In light of Lemma~\ref{Claim31}, there exist scalars $r > 0$ with $\B_r(\overline{x}) \subset \Omega$ and $\delta > 0$ such that 
\begin{eqnarray*}
\sigma(A) &\ge& \delta \quad \textrm{ for all } \quad A \in \mathrm{co} (df(\B_r(\overline{x}))).
\end{eqnarray*}
Take any $x, x' \in \B_r(\overline{x}).$ It follows from Corollary~\ref{MeanValueTheorem2} that
\begin{eqnarray*}
\|f(x') - f(x) \|
& \ge & \inf_{A \in \mathrm{co} (df(\B_r(\overline{x})))} \| A(x' - x)\| \\
& \ge & \inf_{A \in \mathrm{co} (df(\B_r(\overline{x})))} \sigma(A) \|x' - x\|,
\end{eqnarray*}
and so
\begin{eqnarray*}
\|f(x') - f(x) \| & \ge & \delta \|x' - x\|.
\end{eqnarray*}
Therefore, $f$ is injective on $ \B_r(\overline{x}).$ 
By Lemma~\ref{InvarianceDomain}, the restriction mapping 
 $$f \colon \B_r(\overline{x}) \to f(\B_r(\overline{x}))$$
 is open, so indeed, it is a homeomorphism with Lipschitz  inverse. Finally, the definability of the inverse mapping follows directly from the definition.
\end{proof}

The following claim is a non-smooth version of the  implicit function theorem.
	\begin{theorem}[Implicit function theorem]  \label{ImplicitTheorem}
Let $\Omega \subseteq \R^n\times \R^p$ be an open definable set and $F \colon \Omega\to \R^p,  (x, y) \mapsto F(x,y),$ be a continuous definable mapping. Assume that $ (\overline{x}, \overline{y})$ is a point in $\Omega$ such that $F(\overline{x}, \overline{y}) = 0$ and
\begin{equation}\label{Condition}
\lim_{r\to 0^+}\inf _{ A\in \mathrm{co} (dF_x(\B_r(\overline{y}))), \, x\in \B_r( \overline{x})}\sigma (A)>0,
\end{equation}
where for each $x$ (near $\overline{x}$), $F_x$ stands for the mapping $y\mapsto F(x,y)$.
Then there are open definable neighborhoods $U$ of $\overline{x}$ in  $\mathbb{R}^n$ and $V$ of $\overline{y}$ in  $\mathbb{R}^p$ and a unique continuous definable mapping $G \colon U \to V$ such that $G(\overline{x}) = \overline{y}$ and 
$$F(x, G(x)) = 0 \quad \textrm{ for all } \quad x \in U.$$
\end{theorem}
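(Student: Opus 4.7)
The plan is to mimic the classical proof of the implicit function theorem via the \emph{augmentation trick}: bundle $F$ with the identity in $x$ to form the mapping $\widetilde F\colon (x,y)\mapsto(x,F(x,y))$ from an open set of $\R^{n+p}$ to $\R^{n+p}$, and then apply invariance of domain (Lemma~\ref{InvarianceDomain}) together with the nonsmooth mean value inequality of Corollary~\ref{MeanValueTheorem2}. First, condition~(\ref{Condition}) supplies $r_0>0$ and $\delta>0$ with $\B_{r_0}(\bar x)\times\B_{r_0}(\bar y)\subset\Omega$ and
$$\inf\bigl\{\sigma(A) : A \in \mathrm{co}\bigl(dF_x(\B_{r_0}(\bar y))\bigr),\; x\in \B_{r_0}(\bar x)\bigr\} \;\geq\; \delta.$$
For each fixed $x\in\B_{r_0}(\bar x)$ the map $F_x\colon \B_{r_0}(\bar y)\to\R^p$ is continuous and definable (its graph is a slice of the definable graph of $F$), so applying Corollary~\ref{MeanValueTheorem2} to $F_x$ and using $\|Az\|\geq\sigma(A)\|z\|$ yields the \emph{uniform} lower bound
$$\|F(x,y')-F(x,y)\| \;\geq\; \inf_{A\in\mathrm{co}(dF_x(\B_{r_0}(\bar y)))}\|A(y'-y)\| \;\geq\; \delta\,\|y'-y\|$$
valid for every $x\in\B_{r_0}(\bar x)$ and $y,y'\in\B_{r_0}(\bar y)$.

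Next, introduce the continuous definable map
$$\widetilde F \colon \B_{r_0}(\bar x)\times\B_{r_0}(\bar y)\;\longrightarrow\;\R^{n}\times\R^{p}, \qquad (x,y)\;\longmapsto\;(x,F(x,y)).$$
The uniform bound above makes $\widetilde F$ injective: if $\widetilde F(x_1,y_1)=\widetilde F(x_2,y_2)$ then $x_1=x_2$, whence $F_{x_1}(y_1)=F_{x_1}(y_2)$ forces $y_1=y_2$. Since its domain is an open subset of $\R^{n+p}$ and its codomain is $\R^{n+p}$, Lemma~\ref{InvarianceDomain} applies and $\widetilde F$ is open, hence a homeomorphism onto the open set $W:=\widetilde F\bigl(\B_{r_0}(\bar x)\times\B_{r_0}(\bar y)\bigr)$. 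The inverse $\widetilde F^{-1}\colon W\to\B_{r_0}(\bar x)\times\B_{r_0}(\bar y)$ is then continuous, and since its graph is obtained from the definable graph of $\widetilde F$ by swapping factors, it is definable as well.

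Finally, because $(\bar x,0)=\widetilde F(\bar x,\bar y)\in W$ and $W$ is open, one may choose open definable balls $U\ni\bar x$ and $V_0\ni 0$ with $U\times V_0\subset W$. For each $x\in U$ the point $(x,0)\in W$ then has a unique $\widetilde F$-preimage in $\B_{r_0}(\bar x)\times\B_{r_0}(\bar y)$, which we write as $(x,G(x))$. Setting $V:=\B_{r_0}(\bar y)$, the resulting map $G\colon U\to V$ is continuous and definable (it is the second coordinate of $\widetilde F^{-1}$ restricted to $U\times\{0\}$), satisfies $G(\bar x)=\bar y$ and $F(x,G(x))=0$ for all $x\in U$, and is unique by the injectivity of each $F_x$ on $V$. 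I expect the only non-routine step to be the very first one: recognizing that Corollary~\ref{MeanValueTheorem2} upgrades the co-norm hypothesis (\ref{Condition}) on convex hulls of Jacobians into a genuine uniform-in-$x$ Lipschitz-below estimate for $F_x$, without which $\widetilde F$ need not be injective on a product neighborhood and the invariance-of-domain route would collapse.
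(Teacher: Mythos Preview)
Your proof is correct and follows essentially the same approach as the paper: introduce the augmented map $(x,y)\mapsto(x,F(x,y))$, use Corollary~\ref{MeanValueTheorem2} applied to each $F_x$ together with the co-norm hypothesis~\eqref{Condition} to obtain injectivity on a neighborhood of $(\bar x,\bar y)$, invoke invariance of domain (Lemma~\ref{InvarianceDomain}) to get a definable local homeomorphism, and read off $G$ from the second component of the inverse restricted to $\{u=0\}$. The only cosmetic differences are that the paper works on the Euclidean ball $\B_r(\bar x,\bar y)\subset\R^{n+p}$ rather than the product $\B_{r_0}(\bar x)\times\B_{r_0}(\bar y)$, and defines $U$ and $V$ as definable projections of the homeomorphism's domain and range rather than as balls; neither change affects the argument.
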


\begin{proof} 
Consider the continuous definable mapping
$$f \colon \Omega \to \R^{n} \times \R^{p}, \quad (x, y) \mapsto (x, F(x, y)).$$
By the inequality~\eqref{Condition}, there exists a scalar  $r>0$ such that 
$$\inf _{ A\in \mathrm{co}(dF_x(\B_r(\overline{y}))), \, x\in \B_r(\overline x)}\sigma (A)>0.$$
We first prove that the restriction of $f$ on $\B_r(\overline{x}, \overline{y})$ is injective. To see this, let $(x, y), (x', y') \in \B_r(\overline{x}, \overline{y})$ be such that $f(x,y) = f(x,y').$ Clearly, $x = x'.$ 
Then $$(0,0)=f(x,y)-f(x',y')=(0, F(x,y)-F(x,y'))=(0,F_x(y)-F_x(y')).$$
By Corollary~\ref{MeanValueTheorem2},
we have
\begin{eqnarray*}
	0=\|F_x(y)-F_x(y')\|
	& \ge & \inf_{A \in \mathrm{co} (dF_x(\B_r(\overline{y})))} \|A(y-y')\| \\
	& \ge & \inf_{A \in \mathrm{co} (dF_x(\B_r(\overline{y})))}\sigma (A) \|y-y'\|,
\end{eqnarray*}
which yields $y = y'.$ Hence the restriction mapping
$$f \colon \B_r(\overline{x}, \overline{y}) \to f(\B_r(\overline{x}, \overline{y}))$$
is injective.

 By Lemma~\ref{InvarianceDomain}, the mapping $f$  is open, so indeed, it is a homeomorphism with the continuous definable inverse $f^{-1}$.  
More precisely, we can find open  neighborhoods $W_1\subset \R^{n+p}$ and $W_2\subset \R^{n+p}$  of $(\overline{x},\overline{y})$ and $f(\overline{x},\overline{y})=(\overline{x},0)$, respectively,  so that $f \colon W_1\to W_2$ and  $f^{-1} \colon W_2\to W_1$ are  bijective.
Since $f(x,y)=(x,F(x,y)),$  we can write 
$$f^{-1}(x,u)=(x,g(x,u)),$$
 where  $g\colon W_2\to \R^p$ is a  continuous definable mapping.  Observe that $g(\overline{x},0)=\overline{y}$.
Denote
$$U:= \{x\in \R^n \colon (x,0)\in W_2\}$$
and
$$V:= \{y\in \R^p \colon \exists x\in \R^n, \  (x,y)\in W_1\}.$$
One can check that $U$ and $V$ are  definable open neighborhoods of $\overline{x}$  and $\overline{y}$, respectively. Let $x\in U$. Then $(x,0)\in W_2$ and  $f^{-1}(x,0)=(x,g(x,0))\in W_1$. Hence $g(x,0)\in V$.  Let  $G$ be the continuous definable mapping defined by
$$G\colon U\to V, \quad x\mapsto g(x,0).$$
Then $G(\overline{x}) = g(\overline{x}, 0) = \overline{y}.$ Next, take any $x\in U$. By definition, 
 $(x,0)\in W_2$. It follows that
 $$(x,0)=f(f^{-1}(x,0))=f(x,g(x,0))=(x, F(x,g(x,0)))=(x,F(x,G(x)))$$
and so $F(x,G(x))=0$. 
 
Finally, we prove the uniqueness of the mapping $G$. For this, suppose that   $G_0\colon U\to V$ is some continuous  definable mapping satisfying $F(x, G_0(x))\equiv 0$ on $U$. Take any $x\in U$. We show that $G(x)=G_0(x)$. Indeed, since $(x,0)\in W_2$ and $F(x,G_0(x))=0$, we have
$$f(x,G_0(x))=(x,F(x,G_0(x)))=(x,0).$$
The mapping $f^{-1}$ maps the point $f(x,G_0(x))\in W_2$ to the point $(x,G_0(x))\in W_1$ and, moreover, 
$$(x,G_0(x))=f^{-1}(f (x, G_0(x)))=f^{-1}(x,0)=(x,g(x,0))=(x,G(x)).$$
Hence $G_0(x)=G(x)$. 
\end{proof}

\begin{remark} 
By definition, it is easy to check that if $F$ is of class $C^1$, the inequality~\eqref{Condition} is equivalent to the fact that the derivative mapping $dF_{\overline{x}}(\overline {y}) \colon \mathbb{R}^p \to \mathbb{R}^p$ is surjective,
which is required in the classical implicit function theorem.
\end{remark}
	
\section{A formula for the Brouwer degree of continuous definable mappings} \label{Section5}

Suppose $\Omega$ is an open and bounded set in $\mathbb{R}^n,$ $f \colon \overline{\Omega} \rightarrow \mathbb{R}^n$ is a continuous mapping, and $y \not \in f(\partial \Omega).$ The {\em Brouwer degree} of $f$ on $\Omega$ with respect to $y,$ denoted $\deg(f, \Omega, y),$ is a function with values in $\Z$ which enjoys several important properties (normalization, domain decomposition, local constancy, homotopy invariance, etc.). The Brouwer degree is a power tool used in analysis and topology, in particular, it gives an estimation and the nature of the solution(s) of the equation $f(x) = y$ in $\Omega$ (for example, when this degree is nonzero, the equation $f(x) = y$ has a solution in $\Omega).$

The two propositions below provide some useful properties of the Brouwer degree; for more details, please refer to \cite{Deimling1985, LLoyd1978} and the references therein. 

\newpage
\begin{proposition}\label{Proposition51}
The following statements hold.
\begin{enumerate}[{\rm (i)}]
\item {\rm (Domain decomposition)} If $\Omega_1$ and $\Omega_2$  are disjoint open subsets of $\Omega$ such that $y\notin f(\overline{\Omega} \setminus (\Omega_1\cup \Omega_2))$, then $$\deg(f,\Omega, y) = \deg(f,\Omega_1,y) + \deg(f,\Omega_2,y).$$
\item {\rm (Local constancy)} $\deg(f, \Omega, \cdot)$ is constant on $\B_r(y),$ where $r := \mathrm{dist} (y, f(\partial \Omega))$-the distance from $y$ to $f(\partial \Omega),$ that is
$$\deg(f, \Omega, y') = \deg(f, \Omega, y) \quad \textrm{ for all } \quad y' \in \B_r(y).$$

\item {\rm (Homotopy invariance)} If $H \colon \overline{\Omega} \times [0,1]\to\R^n$ is continuous, $y \colon  [0,1]\to\R^n$ is continuous, and $y(t) \not\in H(\partial \Omega, t)$ for every $t\in [0,1]$, then $\deg(H(\cdot, t), \Omega, y(t))$ is independent of $t\in [0,1]$.
\end{enumerate}
\end{proposition}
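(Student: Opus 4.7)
This proposition collects three classical properties of the Brouwer degree rather than new results about definable mappings, so the plan is to recover them from the standard integral--analytic construction of the degree (along the lines of \cite{Deimling1985, LLoyd1978}). I would first fix a working definition: for $f \in C^2(\overline{\Omega}, \mathbb{R}^n)$ and a regular value $y \notin f(\partial \Omega)$, set
$$\deg(f, \Omega, y) := \sum_{x \in f^{-1}(y)} \operatorname{sign} \det df(x),$$
which is a finite integer since $\overline{\Omega}$ is compact and $f^{-1}(y)$ is discrete. One then extends to arbitrary continuous $f$ by mollification and to arbitrary $y \notin f(\partial \Omega)$ by perturbing $y$ to a nearby regular value (available by Sard's theorem), checking that the resulting integer is independent of the choices made. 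With this machinery in hand, I would establish the three statements in the order (iii) $\Rightarrow$ (ii) $\Rightarrow$ (i).

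For part (iii), the idea is to approximate $H$ uniformly on $\overline{\Omega} \times [0,1]$ by a $C^2$ homotopy $\widetilde H$ so close to $H$ that $\widetilde H(x,t) \ne y(t)$ on $\partial\Omega \times [0,1]$; this is possible because $\|H(x,t) - y(t)\|$ is bounded below by a positive constant on the compact boundary strip. By a parametric Sard argument I can also arrange that $y(t)$ is a regular value of $\widetilde H(\cdot, t)$ for all but finitely many $t \in [0,1]$. At each exceptional $t_0$, the solution set of $\widetilde H(x,t) = y(t)$ undergoes a generic bifurcation in which preimages of opposite sign are created or annihilated in pairs, so the signed count is preserved through $t_0$. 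Hence the integer $\deg(\widetilde H(\cdot,t), \Omega, y(t))$ is constant on $[0,1]$; passing to the uniform limit gives the claim for $H$.

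Property (ii) is then an immediate consequence: for $y' \in \B_r(y)$ with $r := \mathrm{dist}(y, f(\partial\Omega))$, consider the constant homotopy $H(x,t) := f(x)$ and the path $y(t) := (1-t)y + t y'$. The segment $[y, y']$ lies inside $\B_r(y)$, which is disjoint from $f(\partial \Omega)$, so $y(t) \notin H(\partial\Omega, t)$ throughout, and applying (iii) yields $\deg(f, \Omega, y) = \deg(f, \Omega, y')$.

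For (i), fix $f$ and $y \notin f(\overline{\Omega} \setminus (\Omega_1 \cup \Omega_2))$ and choose a $C^2$ approximation $\widetilde f$ of $f$ so close that $y$ is a regular value of $\widetilde f$ and $y \notin \widetilde f(\overline{\Omega} \setminus (\Omega_1 \cup \Omega_2))$, which is possible since the latter set is compact and $y$ has positive distance to its image under $f$. Then $\widetilde f^{-1}(y) \cap \overline{\Omega} \subset \Omega_1 \cup \Omega_2$, and because $\Omega_1 \cap \Omega_2 = \emptyset$ the finite preimage partitions cleanly, so the signed count splits as a sum and we get the additivity. The main obstacle in the whole proposal is the invariance-under-homotopy argument in (iii)---specifically, the verification that pair creation and annihilation events cancel in sign---which is the heart of classical Brouwer degree theory and which I would import from \cite{Deimling1985, LLoyd1978} rather than reprove from scratch here.
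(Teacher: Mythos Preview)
Your sketch is broadly correct as an outline of the classical construction, but it goes well beyond what the paper does: the paper provides no proof whatsoever for this proposition. It is stated as a collection of standard facts about the Brouwer degree, with the sentence ``for more details, please refer to \cite{Deimling1985, LLoyd1978} and the references therein'' serving in lieu of any argument. So there is nothing to compare your proposal against except a citation.

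That said, a small comment on your sketch itself. The order of derivation you chose, (iii) $\Rightarrow$ (ii) $\Rightarrow$ (i), is slightly unusual: in most standard treatments (including Deimling and Lloyd) one first establishes that the degree is well defined for $C^2$ maps at regular values, then proves it is locally constant in both $f$ and $y$ via an integral representation or the Heinz trick, and only afterwards obtains homotopy invariance as a consequence of that local constancy plus compactness of $[0,1]$. Your description of (iii) via ``generic bifurcations'' where ``preimages of opposite sign are created or annihilated in pairs'' is the intuitive picture but is not how the rigorous proofs in \cite{Deimling1985, LLoyd1978} proceed; they avoid any transversality or bifurcation analysis by working with mollified integrals. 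If you were actually to write out this section, importing the integral argument from those references would be cleaner than the bifurcation language. But again, for the purposes of this paper none of that is needed: a citation suffices.
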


\begin{proposition}\label{Proposition52}
Let $f$ be of class $C^1$ on $\Omega,$ and assume that $y \not \in f(\partial \Omega)$ be a regular value of $f.$ Then
$$\deg (f, \Omega, y) = \sum_{x \in f^{-1}(y)} \mathrm{sign} \det df(x).$$
\end{proposition}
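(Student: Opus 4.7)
The plan is to combine the three properties of the Brouwer degree listed in Proposition~\ref{Proposition51}---domain decomposition, local constancy, and homotopy invariance---with the classical $C^1$ inverse function theorem, reducing the computation to the degree of an invertible linear map.

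First I would verify that $f^{-1}(y)$ is finite. Since $y \notin f(\partial \Omega)$ and $\overline{\Omega}$ is compact, $f^{-1}(y)$ is a compact subset of $\Omega$. At every $\overline{x} \in f^{-1}(y)$, the Jacobian $df(\overline{x})$ is invertible because $y$ is a regular value, so the classical $C^1$ inverse function theorem provides a neighborhood on which $f$ is a diffeomorphism; in particular $\overline{x}$ is isolated in $f^{-1}(y)$. A compact set of isolated points is finite, so $f^{-1}(y) = \{x_1,\ldots, x_k\}$ (with $k=0$ permitted). Shrinking each neighborhood, I may take pairwise disjoint open balls $\B_{r_i}(x_i) \subset \Omega$ on each of which $\det df$ keeps the sign of $\det df(x_i)$. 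Since $y \notin f(\overline{\Omega} \setminus \bigcup_i \B_{r_i}(x_i))$, domain decomposition gives
\[
\deg(f, \Omega, y) \;=\; \sum_{i=1}^{k} \deg(f, \B_{r_i}(x_i), y).
\]

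Next I would evaluate each local term by a homotopy to the linearization. Set $A_i := df(x_i)$ and $T_i(x) := y + A_i(x - x_i)$, and consider the straight-line homotopy $H_i(x,t) := (1-t)f(x) + t\,T_i(x)$. Since $f$ is $C^1$ with $f(x_i)=y$, I can shrink $r_i$ so that $\|f(x) - T_i(x)\| \le \tfrac{1}{2}\sigma(A_i)\|x - x_i\|$ on $\B_{r_i}(x_i)$. Writing $H_i(x,t) - y = A_i(x - x_i) + (1-t)(f(x) - T_i(x))$, for $\|x - x_i\| = r_i$ we obtain
\[
\|H_i(x,t) - y\| \;\ge\; \sigma(A_i)r_i - \tfrac{1}{2}\sigma(A_i)r_i \;=\; \tfrac{1}{2}\sigma(A_i)r_i \;>\; 0.
\]
Homotopy invariance then yields $\deg(f, \B_{r_i}(x_i), y) = \deg(T_i, \B_{r_i}(x_i), y)$, and after the affine change of variables this becomes $\deg(A_i, \B_{r_i}(0), 0)$.

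The remaining task is to show $\deg(A, \B_r(0), 0) = \mathrm{sign}\det A$ for an invertible $A \in \R^{n\times n}$. The group of invertible $n\times n$ real matrices has exactly two path components, distinguished by the sign of the determinant. Within the component of $A$, I can choose a continuous path of invertible matrices from $A$ to either the identity $I$ (if $\det A > 0$) or the reflection $R := \mathrm{diag}(-1,1,\ldots,1)$ (if $\det A < 0$); along any such path the homotopy $(x,s) \mapsto A_s x$ never sends $\partial \B_r(0)$ to $0$, so the degree is preserved. The problem thereby reduces to the two normalization computations $\deg(I, \B_r(0), 0) = 1$ and $\deg(R, \B_r(0), 0) = -1$. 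The main obstacle is this last step: verifying these two values requires the normalization axiom of the Brouwer degree, which is not among the three properties gathered in Proposition~\ref{Proposition51}, so one must invoke it from the axiomatic foundation developed in the cited degree-theory references. Granting these two values, summing over $i$ produces exactly $\sum_{x \in f^{-1}(y)} \mathrm{sign}\det df(x)$, completing the proof.
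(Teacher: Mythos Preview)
The paper does not give its own proof of this proposition; it is quoted as a standard fact from Brouwer degree theory, with the references \cite{Deimling1985, LLoyd1978} given just before its statement. Your argument is precisely the classical proof one finds in those references and is correct: localize via the inverse function theorem and domain decomposition, linearize each local contribution by the straight-line homotopy (the $o(\|x-x_i\|)$ remainder from differentiability is what makes your boundary estimate work), and then use the two path components of $GL_n(\mathbb{R})$ together with homotopy invariance to reduce to the identity and a single reflection. The ``obstacle'' you flag is not a gap but an honest bookkeeping remark: the values $\deg(I,\mathbb{B}_r(0),0)=1$ and $\deg(R,\mathbb{B}_r(0),0)=-1$ belong to the normalization axiom and the construction of the degree, and are supplied by the cited texts rather than by Proposition~\ref{Proposition51}.
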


The sum above is finite, since $f^{-1}(y)$ is a discrete set by the inverse function theorem and $\overline{\Omega}$ is compact.
As shown in Theorem~\ref{DegreeTheorem} below, in our setting, this representation is still true even $f$ is {\em not} differentiable at every $x \in f^{-1}(y).$ 

We now suppose that $f \colon \overline{\Omega} \to \mathbb{R}^n$ is a continuous definable mapping. Recall that the set $B_f$ of the points where $f$ is not of class $C^1$ is a nowhere dense definable subset of $\Omega.$ Thanks to Lemma~\ref{Claim32}, the following definition is well-defined.

\begin{definition} 
For a regular point  $\overline{x} \in \Omega$  of $f$, we define ${\rm Sign}_f(\overline{x})$ as the number given by
$${\rm Sign}_f(\overline{x}) := \lim_{x \in \Omega \setminus B_f, x \to \overline{x}} {\rm sign \det} df(x).$$
\end{definition}

The next lemma shows that ${\rm Sign}_f(\cdot)$ is locally constant around  every regular point $\overline{x}$ of $f.$ Recall that by  Lemma~\ref{Claim31}, there exists an open neighborhood of $\overline{x}$ such that all points in this neighborhood are regular.

\begin{lemma}\label{Claim33} 
	If $\overline{x} \in \Omega$ is a regular point of $f$,  then there exists a scalar $r > 0$ such that 
	$${\rm Sign}_f(x) = {\rm Sign}_f(\overline{x})	\quad \textrm{ for  all } \quad x\in \B_{r}(\overline{x}).$$
\end{lemma}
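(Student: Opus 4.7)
The plan is to extract the result almost directly from Lemma~\ref{Claim32} and the monotonicity of $\mathrm{co}(df(\B_\rho(\cdot)))$ in the radius $\rho$. The key observation is that once we have an open ball around $\overline{x}$ on which $\det A$ has a constant sign for every $A \in \mathrm{co}(df(\B_r(\overline{x})))$, this same constant sign automatically governs every small ball centered at a nearby point.

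First I would invoke Lemma~\ref{Claim31} together with Lemma~\ref{Claim32} to select $r > 0$ small enough that (a) $\B_r(\overline{x}) \subset \Omega$, (b) every $x \in \B_r(\overline{x})$ is a regular point of $f$, and (c) there is a sign $\varepsilon \in \{-1, +1\}$ with $\mathrm{sign \det} A = \varepsilon$ for every $A \in \mathrm{co}(df(\B_r(\overline{x})))$. Since $B_f$ is nowhere dense in $\Omega$, the intersection $\B_r(\overline{x}) \setminus B_f$ is nonempty, and for every $x' \in \B_r(\overline{x}) \setminus B_f$ we have $df(x') \in df(\B_r(\overline{x})) \subset \mathrm{co}(df(\B_r(\overline{x})))$, whence $\mathrm{sign \det} df(x') = \varepsilon$. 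Taking the limit along $x' \to \overline{x}$ with $x' \in \Omega \setminus B_f$ gives $\mathrm{Sign}_f(\overline{x}) = \varepsilon$.

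Next, fix any $x \in \B_r(\overline{x})$. Choose $\rho > 0$ so small that $\B_\rho(x) \subset \B_r(\overline{x})$; then
\[
\mathrm{co}(df(\B_\rho(x))) \subset \mathrm{co}(df(\B_r(\overline{x}))),
\]
so $\mathrm{sign \det} A = \varepsilon$ persists for every $A \in \mathrm{co}(df(\B_\rho(x)))$. In particular, for every $x'' \in \B_\rho(x) \setminus B_f$ we have $\mathrm{sign \det} df(x'') = \varepsilon$, and letting $x'' \to x$ along $\Omega \setminus B_f$ yields $\mathrm{Sign}_f(x) = \varepsilon = \mathrm{Sign}_f(\overline{x})$. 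Since $x \in \B_r(\overline{x})$ was arbitrary, the conclusion follows.

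I do not anticipate a serious obstacle: both ingredients are already in place, the only subtlety is the one-line but essential remark that the nowhere-denseness of $B_f$ guarantees the limit defining $\mathrm{Sign}_f$ is taken along a set that accumulates at every point of $\B_r(\overline{x})$, so that the locally constant sign on $\mathrm{co}(df(\B_\rho(x)))$ can actually be read off as a limit of $\mathrm{sign \det} df(x'')$.
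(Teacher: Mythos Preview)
Your argument is correct and follows essentially the same approach as the paper: invoke Lemma~\ref{Claim32} to obtain a ball $\B_r(\overline{x})$ on which $\det A$ has a constant sign for all $A\in\mathrm{co}(df(\B_r(\overline{x})))$, and then read off $\mathrm{Sign}_f(x)$ from the definition. The paper compresses your second and third paragraphs into the single phrase ``by definition, the desired result follows at once''; your version is simply an explicit unpacking of that sentence (and in fact the auxiliary radius $\rho$ is not even needed, since any $x''\in\B_r(\overline{x})\setminus B_f$ close to $x$ already gives $\mathrm{sign\,det}\,df(x'')=\varepsilon$ directly).
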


\begin{proof}  
In view of Lemma~\ref{Claim32}, there exists a scalar $r > 0$ such that the function 
$$\mathbb{R}^{n \times n} \to \mathbb{R}, \quad A \mapsto \det A,$$ 
has a constant sign on $\mathrm{co} (df(\B_r(\overline{x}))).$ By definition, the desired result follows at once. 
\end{proof}

The main claim of this section, which is inspired by the works of Pourciau \cite{Pourciau1983-2} and Shannon \cite{Shannon1994},  is as follows.

\begin{theorem}\label{DegreeTheorem}
Let $\Omega \subset \mathbb{R}^n$ be an open, bounded and definable set and $f \colon \overline{\Omega} \to \mathbb{R}^n$ be a continuous definable mapping. The following statements hold.
\begin{enumerate}[{\rm (i)}]
\item Suppose that $x \in \Omega$ is a regular point of $f,$ and $y := f(x).$ Let $W \subset \Omega$ be an open neighborhood of ${x}$ such that $\overline{W} \cap f^{-1}(y) = \{x\}.$ Then 
$$\deg(f,W,y) = \mathrm{Sign}_f(x).$$

\item Let $y \not \in f(\partial \Omega)$ be a regular value of $f.$ Then
$$\deg(f,\Omega, y)=\sum_{x\in f^{-1}(y)} \mathrm{Sign}_f(x).$$
\end{enumerate}
\end{theorem}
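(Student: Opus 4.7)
The plan is to establish part (i) and then deduce (ii) from it by the standard finiteness-plus-domain-decomposition argument.

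For (i), fix a regular point $x\in\Omega$ with $y=f(x)$. By Theorem~\ref{InverseTheorem}, there exists $r>0$ with $\mathbb{B}_r(x)\subset W$ on which $f$ is a homeomorphism onto an open neighborhood of $y$. After shrinking $r$, Lemma~\ref{Claim32} additionally ensures that $\det df$ has a constant nonzero sign on $\mathbb{B}_r(x)\setminus B_f$; by definition this sign is exactly $\mathrm{Sign}_f(x)$. Choose $r'\in(0,r)$ with $\overline{\mathbb{B}_{r'}(x)}\subset\mathbb{B}_r(x)$. Injectivity gives $y\notin f(\partial\mathbb{B}_{r'}(x))$, so $\deg(f,\mathbb{B}_{r'}(x),y)$ is defined. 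Since $\overline{W}\cap f^{-1}(y)=\{x\}$ is contained in $\mathbb{B}_{r'}(x)$, the domain-decomposition property (Proposition~\ref{Proposition51}(i), taking $\Omega_2=\emptyset$) yields $\deg(f,W,y)=\deg(f,\mathbb{B}_{r'}(x),y)$.

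The next step is to perturb $y$ to a nearby value $y'$ whose unique preimage in $\mathbb{B}_{r'}(x)$ lies where $f$ is classically smooth, and then to invoke Proposition~\ref{Proposition52}. The key observation is that $f(B_f\cap\mathbb{B}_{r'}(x))$ is a definable set of dimension at most $\dim(B_f\cap\mathbb{B}_{r'}(x))<n$ by Lemma~\ref{DimensionProposition1}(iii), hence of measure zero by Lemma~\ref{DenseIsOpen}. One can therefore pick $y'$ arbitrarily close to $y$ lying in the open set $f(\mathbb{B}_{r'}(x))$ but outside $f(B_f\cap\mathbb{B}_{r'}(x))$. Injectivity of $f$ on $\mathbb{B}_{r'}(x)$ forces $f^{-1}(y')\cap\mathbb{B}_{r'}(x)=\{x'\}$ for some $x'\in\mathbb{B}_{r'}(x)\setminus B_f$. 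Taking $y'$ close enough that local constancy (Proposition~\ref{Proposition51}(ii)) applies gives $\deg(f,\mathbb{B}_{r'}(x),y')=\deg(f,\mathbb{B}_{r'}(x),y)$. Now pick a ball $\mathbb{B}_s(x')\subset\mathbb{B}_{r'}(x)\setminus B_f$; on it $f$ is of class $C^1$ with $\det df$ nonvanishing, so domain decomposition once more reduces the computation to $\deg(f,\mathbb{B}_s(x'),y')$, which the classical Proposition~\ref{Proposition52} evaluates to $\mathrm{sign}\det df(x')=\mathrm{Sign}_f(x)$. Chaining the equalities proves (i).

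Part (ii) is then a short corollary. Because $y\notin f(\partial\Omega)$, the set $f^{-1}(y)$ is contained in $\Omega$ and is compact (as $\overline{\Omega}$ is). Each point of it is regular, so Theorem~\ref{InverseTheorem} makes it isolated; compactness forces $f^{-1}(y)=\{x_1,\dots,x_k\}$ to be finite. Choose pairwise disjoint open neighborhoods $W_i\subset\Omega$ of $x_i$, apply Proposition~\ref{Proposition51}(i) inductively to get $\deg(f,\Omega,y)=\sum_i\deg(f,W_i,y)$, and then apply (i) to each term.

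The main obstacle, as I see it, is the perturbation step: guaranteeing that a generic $y'$ near $y$ has its unique preimage in $\mathbb{B}_{r'}(x)$ automatically located off $B_f$. In the smooth case this is tautological, but here it is precisely where definability enters, via the dimension bound on $f(B_f\cap\mathbb{B}_{r'}(x))$ together with the equivalence between low dimension and measure zero in Lemma~\ref{DenseIsOpen}. All remaining steps amount to careful bookkeeping to check the hypotheses of the domain-decomposition and local-constancy rules for the Brouwer degree.
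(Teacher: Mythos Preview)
Your argument is correct and follows essentially the same route as the paper's: reduce to a small ball via domain decomposition, perturb to a nearby value whose preimage avoids the bad set using the dimension bound coming from definability, and apply the classical formula (Proposition~\ref{Proposition52}) there. The only difference is cosmetic---the paper first picks $x'\in\mathbb{B}_r(x)\setminus\overline{B_f}$ in the domain and sets $y':=f(x')$, whereas you pick $y'$ in the image first---though to guarantee the existence of the ball $\mathbb{B}_s(x')\subset\mathbb{B}_{r'}(x)\setminus B_f$ in your final step you should, as the paper does, arrange $x'\notin\overline{B_f}$ rather than merely $x'\notin B_f$.
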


\begin{proof}
(i) By Lemmas~\ref{Claim31}, \ref{Claim33}, and Theorem~\ref{InverseTheorem}, we can find a scalar $r > 0$ with $\B_r(x)\subset W$ such that the following properties hold true:
\begin{enumerate}[{\rm $\bullet$}]
\item every $x' \in \B_r(x)$ is a regular point of $f$ and satisfies $\mathrm{Sign}_f(x') = \mathrm{Sign}_f(x);$ and
\item the restriction mapping $f \colon \B_r(x) \to f(\B_r(x))$ is a homeomorphism.
\end{enumerate}
We know that (see Lemmas~\ref{Lemma21}, \ref{DimensionProposition1} and \ref{DenseIsOpen})
$$\dim \overline{B_f} = \dim B_f < n.$$
Consequently, we can find a point $x' \in \B_r(x)\setminus \overline {B_f}$ such that $y' := f(x') \in \mathbb{B}_r(y)$ with $r := \mathrm{dist}(y, f(\partial{\B_r(x)})).$
By Proposition~\ref{Proposition51}(ii), then
 $$\deg(f, \B_r(x), y) = \deg(f, \B_r(x), y').$$
Clearly, there exists an open set $V$ in $\R^n$ with $x'\in V$ and $\overline{V}\subset \B_r(x)\setminus \overline {B_f}$. In light of Proposition~\ref{Proposition51}(i), we have
$$\deg(f, \B_r(x), y') = \deg(f, V, y').$$
On the other hand, since the restriction of $f$ on $V$ is of class $C^1,$ it follows from Proposition~\ref{Proposition52} that
$$\deg(f, V, y') = \mathrm{sign} \det df(x').$$
Therefore
\begin{eqnarray*}
\deg(f, W, y) &=& \deg(f, \B_r(x), y) \ = \ \deg(f, \B_r(x), y')\\
&=& \deg(f, V, y') \ = \ \mathrm{sign} \det df(x')\\
&=& \mathrm{Sign}_f(x),
\end{eqnarray*}
where the first equality follows directly from Proposition~\ref{Proposition51}(i).

(ii) Now assume that $y$ is a regular value of $f$ and $y \not \in f(\partial \Omega).$ It follows from the inverse function theorem (Theorem~\ref{InverseTheorem}) that $f^{-1}(y)$ is a finite set $\{x_1, \ldots, x_m\}$ and there exist open neighborhoods $W_k \subset \Omega$ of $x_k$ for each $k = 1, \ldots, m,$ such that $\overline{W_k} \cap f^{-1}(y) = \{x_k\}.$ Since $y \not \in f(\overline{\Omega} \setminus \cup_{k = 1}^m W_k),$ the domain decomposition property of the Brouwer degree (Proposition~\ref{Proposition51}(i)) guarantees that 
$$\deg(f, \Omega, y) = \sum_{k = 1}^m \deg(f, W_k, y).$$
This, together with the first claim, establishes the second claim.
\end{proof}

\begin{example}
Consider the continuous semi-algebraic mapping given in Example~\ref{Example2}(ii):
$$f \colon \R^2\to\R^2, \quad x:= (x_1, x_2) \mapsto f(x) :=(\sqrt[3]{x_1}, \sqrt[3]{x_2}).$$
We know that $y := (0, 0)$ is a regular value of $f$ and $f^{-1}(y) = \{(0, 0)\}.$ 
Since $f$ is not locally Lipschitz and is not differentiable at $\overline{x} := (0, 0),$ \cite[Theorem~3.5]{Pourciau1983-2} and \cite[Theorem~10]{Shannon1994} cannot be applied. On the other hand, a simple calculation shows that $\mathrm{Sign}_f(\overline{x}) = 1,$ and so $\deg(f, \mathbb{B}_r(\overline{x}), y) = 1$ for every $r > 0$ in view of Theorem~\ref{DegreeTheorem}.
\end{example}

\section{Global properties of continuous definable mappings} \label{Section6}

In this section we study the surjectivity, global invertibility and openness of continuous definable mappings. 
Recall that a continuous mapping $f \colon \mathbb{R}^n \to \mathbb{R}^n$ is said to be {\em  proper} if the preimage of a compact set is compact (this is equivalent to requiring that $\|f(x)\| \to \infty$ as $\|x\| \to \infty$) and $f$ is {\em quasi-interior} (see \cite{Whyburn1942}) if $y$ lies in the interior of $f(U)$ for every $y$ and every open set $U$ that contains a compact component of $f^{-1}(y).$ 

The next result is inspired by the work of Pourciau~\cite[Theorem~C]{Pourciau1983-1}. To see related results for smooth mappings, consult \cite{Church1963, Church1967, Denkowski2017, Gamboa1996, Hirsch2002, Sternberg1959}.

\begin{theorem}\label{PourciauTheorem}
Let $f \colon \mathbb{R}^n \to \mathbb{R}^n$ be a proper, continuous and definable mapping. Assume the following two conditions hold:
\begin{enumerate}[{\rm (i)}]
\item $\det df(x)$ is nonnegative (or nonpositive) for all $x \in \mathbb{R}^n \setminus B_f$; and 
\item the set of critical points of $f$ has measure zero. 
\end{enumerate}
Then $f$ is surjective and quasi-interior.
\end{theorem}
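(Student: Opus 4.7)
The strategy is to establish quasi-interiority first, and then derive surjectivity from it. For surjectivity, properness of $f$ makes $f$ a closed map, so $f(\R^n)$ is a closed subset of $\R^n$. On the other hand, assuming quasi-interiority, any $y \in f(\R^n)$ has nonempty compact preimage $f^{-1}(y)$, which is definable; by cell decomposition it has finitely many connected components, all compact. Picking any such component $K$ and taking $U = \R^n$ in the definition of quasi-interiority yields $y \in \mathrm{int}(f(\R^n))$. Hence $f(\R^n)$ is also open, and being a nonempty clopen subset of the connected space $\R^n$ it must coincide with $\R^n$.

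The core of the argument is quasi-interiority. Fix $y_0$, an open set $U$, and a compact component $K$ of $f^{-1}(y_0)$ with $K \subset U$. Since $f^{-1}(y_0)$ has only finitely many components, all compact, I would separate $K$ from the remaining ones by setting $V := \{x \colon \mathrm{dist}(x, K) < \delta\}$ for $\delta > 0$ small enough, producing a bounded definable open set with $K \subset V$, $\overline V \subset U$, and $\overline V \cap f^{-1}(y_0) = K$. Then $y_0 \notin f(\partial V)$, so $\deg(f, V, y_0)$ is defined. The plan is to show this degree is nonzero; then Proposition~\ref{Proposition51}(ii) gives $\deg(f, V, \cdot) \ne 0$ on $\B_{r_0}(y_0)$ for $r_0 := \mathrm{dist}(y_0, f(\partial V)) > 0$, whence $f^{-1}(y) \cap V \ne \emptyset$ for all such $y$, giving $\B_{r_0}(y_0) \subset f(V) \subset f(U)$.

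To prove $\deg(f, V, y_0) \ne 0$, I would exploit the sign rigidity forced by (i). By (i) combined with Lemma~\ref{Claim32}, every regular point $x$ of $f$ satisfies $\mathrm{Sign}_f(x) = \varepsilon$ for one fixed $\varepsilon \in \{+1, -1\}$. By Lemma~\ref{Claim31} and hypothesis (ii) (via Lemma~\ref{DenseIsOpen}), the set of regular points is open and dense in $\R^n$, so the subset $V_{\mathrm{reg}}$ of regular points lying in $V$ is open and dense in $V$. Theorem~\ref{InverseTheorem} says that $f$ is a local homeomorphism at each point of $V_{\mathrm{reg}}$, so $f(V_{\mathrm{reg}})$ is open in $\R^n$; and approximating any $x_0 \in K$ by a sequence in $V_{\mathrm{reg}}$ shows $y_0 \in \overline{f(V_{\mathrm{reg}})}$. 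Hence $\B_{r_0}(y_0) \cap f(V_{\mathrm{reg}})$ is a nonempty open subset of $\R^n$, and the Sard theorem supplies a regular value $y'$ in it. Then Theorem~\ref{DegreeTheorem}(ii) gives
$$\deg(f, V, y') \;=\; \sum_{x \in f^{-1}(y') \cap V} \mathrm{Sign}_f(x) \;=\; \varepsilon\, |f^{-1}(y') \cap V| \;\ne\; 0,$$
because $y' \in f(V_{\mathrm{reg}})$ forces $f^{-1}(y') \cap V$ to be nonempty, and all the summands carry the same sign. Local constancy then yields $\deg(f, V, y_0) = \deg(f, V, y') \ne 0$, as desired.

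The main obstacle is this final nonvanishing step: the difficulty is not any single hard lemma but the careful assembly of four ingredients — the constant sign of $\mathrm{Sign}_f$ on regular points (from hypothesis (i) and Lemma~\ref{Claim32}), local openness of $f$ at regular points (from the inverse function theorem), density of regular values (from Sard), and local constancy of the Brouwer degree — in order to migrate from the possibly critical $y_0$ to a nearby regular value $y'$ that is genuinely attained inside $V$, where Theorem~\ref{DegreeTheorem}(ii) applies. Everything else is routine degree theory.
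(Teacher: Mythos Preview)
Your proof is correct, and the quasi-interiority part is essentially the paper's argument: both isolate the component $K$ in a bounded definable open $V$, find a nearby regular value $y'\in f(V)$ at which Theorem~\ref{DegreeTheorem}(ii) applies, and use hypothesis~(i) to force all summands in the degree formula to carry the same sign, whence $\deg(f,V,y_0)=\deg(f,V,y')\ne 0$ by local constancy. The only cosmetic difference is how $y'$ is produced: the paper avoids $f(B_f\cup S)$ by a direct dimension count (Lemma~\ref{DimensionProposition1}), whereas you invoke the Sard theorem and the openness of $f(V_{\mathrm{reg}})$; these are interchangeable.

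The genuine divergence is the surjectivity step. The paper proves it \emph{independently} of quasi-interiority: it first exhibits one regular value $\bar y$ with $\deg(f,V,\bar y)\ne 0$ for every large ball $V$, and then for an arbitrary $y$ uses properness to enclose $f^{-1}([\bar y,y])$ in a ball and transports the nonzero degree along the segment via homotopy invariance (Proposition~\ref{Proposition51}(iii)). You instead deduce surjectivity \emph{from} quasi-interiority: properness makes $f$ a closed map, quasi-interiority (with $U=\R^n$) makes $f(\R^n)$ open, and connectedness of $\R^n$ finishes. Your route is more economical---one degree computation instead of two, and no appeal to homotopy invariance---and is the standard way to extract surjectivity once quasi-interiority is known. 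The paper's route, in exchange, makes the degree-theoretic mechanism behind surjectivity explicit and does not rely on knowing in advance that $f^{-1}(y)$ has finitely many (hence compact) components.
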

\begin{proof}
We break the proof in two parts.
	
\subsubsection*{The mapping $f$ is surjective}
Observe that  if there exists an $\overline{y} \in \mathbb{R}^n$ with the property that $\deg(f, V, \overline{y}) \ne 0$ whenever $V$ is an open ball centered at the origin and containing $f^{-1}(\overline{y}),$ then $f$ is surjective. To see this, choose any $y$ in $\mathbb{R}^n$ and let $\gamma$ stand for the line segment connecting $\overline{y}$ to $y,$  and $\gamma(t) = (1 - t)\overline{y} + t y$ for $0 \le t \le 1.$  As $f$ is proper and $\gamma$ is compact, $f^{-1}(\gamma)$ is compact. We can therefore find an open ball $V := \{x \in \mathbb{R}^n \ : \ \| x \| < \delta\}$ containing $f^{-1}(\gamma).$ By the homotopy invariance of degree (see Proposition~\ref{Proposition51}(iii)), then 
$$\deg(f, V, y) = \deg(f, V, \gamma(t)) = \deg(f, V, \overline{y}) \ne 0.$$ 
Hence, $f(x) = y$ for some $x$ in $V,$ and so $f$ is surjective.
	
To prove that $f$ is surjective, we need to  find $\bar{y}$ with the mentioned property. Recall that the set $B_f$ of the points where $f$ is not of class $C^1$ is a nowhere dense definable subset of $\mathbb{R}^n.$ Consider the definable set
\begin{eqnarray*}
S &:=& \{x \in \mathbb{R}^n \setminus B_f \ : \ \nu_f(x) = 0 \} \\
 &\,=& \{x \in \mathbb{R}^n \setminus B_f \ : \ \det df (x) = 0 \}.
\end{eqnarray*}
Then the set $S$ has measure zero because of the second condition (ii). Hence 
$\dim (B_f \cup S) < n$ and $\det df(\overline{x}) \ne 0$ for some $\overline{x} \not \in B_f \cup S.$ By the inverse mapping theorem (Theorem~\ref{InverseTheorem}), the image $f(\mathbb{R}^n)$ must contain an open set $W.$ On the other hand, it follows from Lemma~\ref{DimensionProposition1} that
$$\dim f(B_f \cup S) \le \dim (B_f \cup S)  < n.$$
So there is some $\overline{y}$ in $W$ that is not in $ f(B_f \cup S) .$ Then $f^{-1}(\overline{y})$ is nonempty, and it is compact because $f$ is proper. Suppose $V$ is any open ball centered at the origin and containing $f^{-1}(\overline{y}).$ As $\overline{y}$ is not in $f (\partial V),$ the degree $\deg (f, V, \overline{y})$ is defined. Note that for all $x \in f^{-1}(\overline{y})$ we have $x \in \mathbb{R}^n \setminus (B_f \cup S)$ and so $\det df(x) \ne 0.$ In particular, $\overline{y}$ is a regular value of $f.$ In light of Theorem~\ref{DegreeTheorem}, we have
\begin{eqnarray*}
\deg (f, V, \overline{y}) &=& \sum_{x \in V \cap f^{-1}(\overline{y})} \mathrm{sign} \det df(x).
\end{eqnarray*}
Now the condition (i) implies that
\begin{eqnarray*}
\sum_{x \in f^{-1}(\overline{y})} \mathrm{sign} \det df(x) &\ne& 0.
\end{eqnarray*}
Therefore $\deg (f, V,\overline{y}) \ne 0,$ and this completes the proof that $f$ is surjective.
	
\subsubsection*{The mapping $f$ is quasi-interior} Choose any $y \in f(\mathbb{R}^n) = \mathbb{R}^n$ and suppose $C$ is a connected component of $f^{-1}(y).$ Since $f$ is proper, $C$ is compact. Let $U$ be an open set containing $C.$ Then we can find a bounded, open and definable set $V$ containing $C$ with $\overline{V} \subset U$ and $f^{-1}(y) \cap \partial V = \emptyset.$ So the degree $\deg (f, V, y)$ is defined. By Proposition~\ref{Proposition51}(ii), there is an open neighborhood $W$ of $y,$ disjoint from $f(\partial V)$, on which $\deg (f, V, \cdot)$ is constant. Since $\dim (B_f \cup S) < n$ and since $f$ is continuous, one can  find a point $\bar{x} \in V \setminus (B_f \cup S)$ such that $f(\bar{x})\in W$. By the inverse mapping theorem (Theorem~\ref{InverseTheorem}), $f(V) \cap W$ must contain an open set $W'.$ We know that $\dim f(B_f \cup S)  < n,$ so there is some $\overline{y}$ in $W'$ that is not in $ f(B_f \cup S).$ Thanks to Theorem~\ref{DegreeTheorem}, we get
\begin{eqnarray*}
\deg (f, V, \overline{y}) &=& \sum_{x \in V \cap f^{-1}(\overline{y}) } \mathrm{sign} \det df(x).
\end{eqnarray*}
	Note that $V \cap f^{-1}(\overline{y})$ is nonempty and that $\det df(x)$ is nonzero for all $x \in V \cap f^{-1}(\overline{y}).$ Now the condition (i) implies that 
	$$\sum_{x \in V \cap f^{-1}(\overline{y}) } \mathrm{sign} \det df(x) \ne 0.$$
	Therefore $\deg (f, V, \overline{y}) \ne 0.$ Since $\deg (f, V, \cdot)$ is constant on $W,$ we have $\deg (f, V, w) \ne 0$ for all $w \in W.$ By the properties of degree, for each $w \in W,$ there exists $x \in V$ such that $f(x) = w.$ Consequently, $W \subset f(V) \subset f(U),$ which implies that $y$ lies in the interior of $f(U).$ Thus $f$ is quasi-interior.
\end{proof}

The following example, which is taken from \cite[Example~4.6]{Denkowski2017}, shows that the assumptions in Theorem~\ref{PourciauTheorem} do not imply that $f$ is open. (Recall that $f$ is {\em open} if it maps open sets onto open sets.)

\begin{example}
Consider the polynomial mapping 
$$f \colon \mathbb{R}^4 \to \mathbb{R}^4, \quad x := (x_1, x_2, x_3, x_4) \mapsto  (x_1^2 - x_2^2 - x_3^2 - x_4^2, 2x_1x_2, 2x_1x_3, 2x_1x_4).$$It is easy to see that $f$ is proper and that
\begin{eqnarray*}
df(x) = 
\begin{bmatrix}
2x_1 & -2x_2 & - 2x_3 & - 2x_4\\
2x_2 & 2x_1 & 0 &  0 \\
2x_3 & 0 & 2x_1 & 0 \\
2x_4 & 0 & 0 & 2x_1
\end{bmatrix}.
\end{eqnarray*}
Hence $\det df(x) = 16x_1^2 (x_1^2 + x_2^2 + x_3^2 + x_4^2) \ge 0$ for all $x \in \mathbb{R}^4.$ Moreover, the set of critical points of $f$ is
$\{0\} \times \mathbb{R}^3,$ which has measure zero. On the other hand since the set
$$f^{-1}{(-1, 0, 0, 0)} = \{x \in \mathbb{R}^4 \, : \, x_1 = 0, x_2^2 + x_3^2 + x_4^2 = 1 \}$$
is not finite, it follows from \cite[Theorem, page~297]{Gamboa1996} (see also \cite{Denkowski2017, Hirsch2002}) that $f$ is not open. 
\end{example}

Next, we are concerning with the global invertibility of continuous definable mappings. 
Let us  recall some facts. Let $f \colon \R^n\to\R^n$ be a $C^1$-mapping. By definition, then 
$$\nu_f(x) = \inf_{\|v\|=1}\|df(x)v\| \quad \textrm{ for all } \quad x \in \mathbb{R}^n.$$ 
Furthermore, we have $\nu_f(x) > 0$ if and only if the derivative mapping $df(x) \colon \mathbb{R}^n \to \mathbb{R}^n$ is surjective and in this case it is clear that $\nu_f(x) = \big \|[df(x)]^{-1} \big\|^{-1}.$ With these in mind, then a theorem of Hadamard \cite{Hadamard1906} asserts that $f$ is a global homeomorphism provided that
$$\nu_f(x) \ge \delta \quad \textrm{ for all } \quad x \in \mathbb{R}^n$$
for some scalar $\delta > 0.$ This result has a natural counterpart in our setting. In fact we shall prove a stronger statement as follows; see also \cite{Jelonek2003-2, Kurdyka2000-1, Pourciau1982, Rabier1997}.

\begin{theorem}\label{HadamardTheorem}
Let $f \colon \R^n\to\R^n$ be a definable and local homeomorphism such that the set 
$$K_\infty (f):=\{ y\in \R^n\colon \exists x_k \in \R^n, \|x_k \|\to\infty\  {\rm  such \ that }\ f(x_k )\to y\ {\rm and}\ \|x_k \|\nu_f(x_k )\to 0\}$$
is empty. Then $f$  is a global homeomorphism.
\end{theorem}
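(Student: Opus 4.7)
The plan is to prove that $f$ is proper; once this is established, the standard fact that a proper local homeomorphism of a connected, locally path-connected Hausdorff space to itself is a covering map, combined with the simple connectedness of $\R^n$, immediately forces $f$ to be a homeomorphism onto $\R^n$.

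To prove properness I argue by contradiction. Suppose $f$ is not proper; then there is a sequence $\{x_k\}\subset \R^n$ with $\|x_k\|\to\infty$ and $f(x_k)\to y$ for some $y\in\R^n$. Since $B_f$ is nowhere dense (Lemma~\ref{Lemma21}), after perturbing each $x_k$ I may assume $x_k\in\R^n\setminus B_f$ without destroying either limit. Lemma~\ref{selectioncurve}, applied to the definable set $X:=\R^n\setminus B_f$ and to the continuous definable restriction $f|_X$, then yields a definable $C^1$-curve $\gamma\colon[r,+\infty)\to\R^n\setminus B_f$ with $\|\gamma(t)\|=t$ and $\phi(t):=f(\gamma(t))\to y$.

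Since $\gamma(t)\notin B_f$, the chain rule gives $\phi'(t)=df(\gamma(t))\gamma'(t)$ and, by definition, $\nu_f(\gamma(t))=\sigma(df(\gamma(t)))$. Differentiating $\|\gamma(t)\|^2=t^2$ yields $\langle\gamma(t),\gamma'(t)\rangle=t$, and Cauchy--Schwarz then forces $\|\gamma'(t)\|\ge 1$. Consequently
$$\nu_f(\gamma(t))\,\le\,\frac{\|\phi'(t)\|}{\|\gamma'(t)\|}\,\le\,\|\phi'(t)\|,$$
and it remains to show $t\|\phi'(t)\|\to 0$ as $t\to+\infty$. For each coordinate $\phi_i$ of $\phi$, Lemma~\ref{MonotonicityLemma} forces $\phi_i'$ to be eventually either identically zero or strictly monotone of constant sign; finiteness of $\lim_{t\to\infty}\phi_i(t)$ then forces $|\phi_i'|$ to decrease monotonically to $0$. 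In the representative case $\phi_i'\ge 0$ decreasing to $0$, the elementary bound
$$t\,\phi_i'(2t)\,\le\,\int_{t}^{2t}\phi_i'(s)\,ds\,=\,\phi_i(2t)-\phi_i(t)\,\longrightarrow\,0$$
gives $t\phi_i'(t)\to 0$; the sign-reversed case is identical. Summing over $i$ yields $t\|\phi'(t)\|\to 0$, hence $\|\gamma(t)\|\,\nu_f(\gamma(t))\to 0$. Setting $x_k:=\gamma(k)$ produces a sequence witnessing $y\in K_\infty(f)$, contradicting the hypothesis $K_\infty(f)=\emptyset$.

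The main obstacle I foresee is this o-minimal derivative estimate $t\|\phi'(t)\|\to 0$: the monotonicity lemma must be applied twice---first to control $\phi_i$ itself, then to control $\phi_i'$---before the elementary integral bound becomes available, and one must also take care that the definable curve $\gamma$ can be chosen inside $\R^n\setminus B_f$. The remaining ingredients---curve selection at infinity and the covering-space endgame---are either machinery already set up in the paper or standard point-set topology.
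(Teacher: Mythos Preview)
Your argument is correct, and it proceeds along a genuinely different line from the paper's proof. Both proofs reduce to showing $J(f)\subset K_\infty(f)$ (where $J(f)$ is the set of asymptotic values), and both obtain via curve selection a definable $C^1$-curve $\gamma$ in $\R^n\setminus B_f$ with $\|\gamma(t)\|=t$ and $\phi:=f\circ\gamma\to y$, together with the bound $\|\gamma'(t)\|\ge 1$. The divergence is in how the curve is analyzed. The paper argues by contradiction: assuming $y\notin K_\infty(f)$, one can arrange (after passing to a subsequence and absorbing the inequality $\|x\|\nu_f(x)\ge c$ into the definable set to which curve selection is applied) that $t\,\nu_f(\gamma(t))\ge c>0$ along the curve, whence $\|\phi'(t)\|\ge c/t$ and $\phi$ has infinite arclength, contradicting $\phi(t)\to y$. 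You instead argue directly: from the convergence $\phi_i(t)\to y_i$ and two applications of the monotonicity lemma you extract the o-minimal decay estimate $t\,\phi_i'(t)\to 0$, and then $t\,\nu_f(\gamma(t))\le t\,\|\phi'(t)\|\to 0$ exhibits $y\in K_\infty(f)$. Your route requires the extra (but standard) one-variable lemma on the decay of $t\phi_i'(t)$, while the paper's route needs a little care to ensure the curve selected still satisfies the lower bound $t\,\nu_f\ge c$; in exchange, your argument has a cleaner logical structure, producing the witness for $K_\infty(f)$ constructively rather than via a double contradiction.
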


\begin{proof} 
Consider the set $J(f)$ of values  at which $f$ is not proper (cf. \cite{Jelonek1999}):
$$J_f := \{ y\in \R^n\colon \exists x_k\in \R^n, \|x_k\|\to\infty\  \mathrm{ such\ that }\ f(x_k)\to y\}.$$
Assume that we have proved (cf. \cite[Proposition~3.1]{Kurdyka2000-1}):
\begin{equation*}
J(f)=K_\infty (f).
\end{equation*}
This, of course, implies that the mapping $f$ is proper, and so $f$ is a global homeomorphism.
	
So we are left with proving the equality above. By definition, $K_\infty (f)\subset J(f)$ and so we need to prove that $J(f)\subset K_\infty (f).$ Suppose to the contrary that for some $y\in J(f)$ we have $y\notin K_\infty(f)$. Then there exist a scalar $c>0$ and  a sequence $x_k\in \R^n$ such that 
$$\|x_k\|\to\infty,  \quad f(x_k)\to y  \ \textrm{ and } \quad \|x_k\|\nu_f(x_k)\geq c.$$
In particular, for all $k,$ $x_k$ is a regular point of $f.$ In view of Lemma~\ref{Claim31}, we can assume that $x_k \not \in B_f$ for all $k.$ (Recall that the set $B_f$ of the points where $f$ is not of class $C^1$ is a nowhere dense definable subset of $\mathbb{R}^n.$) Using the curve selection lemma (Lemma~\ref{selectioncurve}), we can find a definable $C^1$-curve $\gamma \colon [r, +\infty) \to \R^n \setminus B_f$ ($r >0$) such that
$$\|\gamma(t)\| = t, \quad f(\gamma (t)) \to y\ {\rm as}\ t\to\infty\  \quad \textrm{ and } \quad \|\gamma (t)\|\nu_f(\gamma(t))\geq c.$$
For all $t \ge r$ we have $\gamma(t) \not \in B_f,$ and so $\nu_f(\gamma(t)) = \inf_{\|v\|=1}\|df(\gamma (t))v\| .$ It follows that
$$\inf_{\|v\|=1}\|df(\gamma (t))v\| \geq \frac{c}{\|\gamma(t)\|}=\frac{c}{t},$$
or equivalently,
$$\|df(\gamma (t))v\|\geq \frac{c}{t}\|v\| \quad {\rm for\ all } \quad v\in \R^n.$$
Taking $v := \gamma'(t)$ (here $\gamma'(t)$ is the derivative of the function $\gamma(\cdot)$), we obtain
$$\|df(\gamma (t))\gamma'(t)\|\geq \frac{c}{t}\|\gamma'(t)\|.$$
On the other hand, we have
$$2t = (t^2)' = \langle \gamma(t), \gamma(t) \rangle ' = 2\langle \gamma(t), \gamma'(t) \rangle,$$
which yields
$$t = |\langle \gamma(t), \gamma'(t) \rangle| \le \|\gamma(t)\| \cdot \|\gamma'(t)\| =  t \cdot \|\gamma'(t)\|.$$
Hence $\|\gamma'(t)\| \geq 1,$ and so
$$\|df(\gamma (t))\gamma'(t)\|\geq \frac{c}{t}.$$
Consequently, 
$$\int_r^\infty\| (f \circ \gamma)'(t)\|dt  = \int_r^\infty\|df(\gamma (t))\gamma'(t)\|dt \geq \int_r^\infty\frac{c}{t}dt=\infty.$$
Note that $f$ is injective on $\gamma$ for $r$ large enough (by Lemma~\ref{MonotonicityLemma}).  Therefore, the curve 
$$f \circ \gamma \colon [r, +\infty) \to \mathbb{R}^n, \quad t \mapsto (f \circ \gamma)(t),$$ 
has infinite length; or equivalently, $(f \circ \gamma)(t)$ has no limit in $\mathbb{R}^n,$ which is a contradiction to $f(\gamma (t)) \to y $ as $t \to +\infty.$
\end{proof}

We finish this section by providing a version of the {\em Hadamard integral condition} \cite{Hadamard1906} for continuous definable mappings; see \cite{Jaramillo2014, Pourciau1988} for related works.

\begin{theorem}
Let $f \colon \R^n\to\R^n$ be a continuous definable mapping. Assume that  there exists a locally Riemann integrable function $c\colon [0, +\infty) \to (0, +\infty)$ such that
\begin{equation*}
\int_0^\infty {c(t)}dt=\infty \quad \textrm{ and } \quad  \nu_f(x) \geq c(\|x\|)  > 0 \quad \textrm{ for all } \quad x \in \R^n.
\end{equation*}	
Then $f$ is a global homeomorphism. 
\end{theorem}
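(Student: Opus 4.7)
The plan is to reduce to the Hadamard-type Theorem~\ref{HadamardTheorem} by first observing that $f$ is a local homeomorphism and then showing directly that $f$ is proper, so that in particular $K_\infty(f) \subset J(f) = \emptyset.$ The hypothesis $\nu_f(x) \ge c(\|x\|) > 0$ makes every point of $\R^n$ a regular point of $f,$ and the inverse function theorem (Theorem~\ref{InverseTheorem}) immediately gives that $f$ is a local homeomorphism.

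For properness I will argue by contradiction: suppose there exist $y \in \R^n$ and a sequence $x_k \in \R^n$ with $\|x_k\| \to \infty$ and $f(x_k) \to y.$ Because $B_f$ is a definable nowhere dense subset of $\R^n$ (Lemma~\ref{Lemma21}), within each ball around $x_k$ the complement of $B_f$ is nonempty, so using continuity of $f$ I can replace each $x_k$ by a nearby point outside $B_f$ while preserving $\|x_k\| \to \infty$ and $f(x_k) \to y.$ Then the curve selection lemma at infinity (Lemma~\ref{selectioncurve}), applied to the continuous definable restriction $f\big|_{\R^n \setminus B_f},$ yields a definable $C^1$-curve $\gamma \colon [r, +\infty) \to \R^n \setminus B_f$ with $\|\gamma(t)\| = t$ and $f(\gamma(t)) \to y$ as $t \to +\infty.$

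Since $\gamma(t) \notin B_f,$ the mapping $f$ is of class $C^1$ near $\gamma(t),$ hence $\nu_f(\gamma(t)) = \sigma(df(\gamma(t))) \ge c(t).$ Substituting $v := \gamma'(t)$ in $\|df(\gamma(t))v\| \ge c(t)\|v\|$ and using $\|\gamma'(t)\| \ge 1$ (derived as in the proof of Theorem~\ref{HadamardTheorem} by differentiating $\|\gamma(t)\|^2 = t^2$), I obtain $\|(f \circ \gamma)'(t)\| \ge c(t),$ whence
\[
\int_r^{+\infty} \|(f \circ \gamma)'(t)\|\, dt \;\ge\; \int_r^{+\infty} c(t)\, dt \;=\; +\infty.
\]
On the other hand, because $f(\gamma(t)) \to y,$ the image of the definable $C^1$-curve $f \circ \gamma$ is bounded; by the monotonicity lemma (Lemma~\ref{MonotonicityLemma}) I may assume $f$ is injective along $\gamma$ for $t$ large, so $f \circ \gamma$ is a bounded injective definable curve, and such a curve must have finite length in an o-minimal structure. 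This is exactly the contradiction used in the proof of Theorem~\ref{HadamardTheorem}, and it shows that $f$ is proper. Hence $K_\infty(f) \subset J(f) = \emptyset,$ and Theorem~\ref{HadamardTheorem} yields that $f$ is a global homeomorphism.

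The main obstacle is that the integral condition $\int_0^\infty c(t)\, dt = +\infty$ does not imply any lower bound on $\|x\|\nu_f(x)$ (for instance $c(t) = 1/((t+2)\log(t+2))$ satisfies the hypothesis yet $tc(t) \to 0$), so one cannot simply verify the hypothesis $K_\infty(f) = \emptyset$ and invoke Theorem~\ref{HadamardTheorem} as a black box. The curve selection argument from that theorem has to be re-executed with the sharper lower bound $\nu_f(\gamma(t)) \ge c(t)$ replacing $\nu_f(\gamma(t)) \ge c/t,$ and the divergence of $\int c(t)\,dt$ is precisely what powers the final length contradiction.
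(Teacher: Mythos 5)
Your proof is correct and follows essentially the same route as the paper: reduce to showing that $f$ is proper, move the sequence off $B_f$, apply the curve selection lemma at infinity to get $\gamma$ with $\|\gamma(t)\|=t$, and derive the contradiction between $\int_r^\infty \|(f\circ\gamma)'(t)\|\,dt \ge \int_r^\infty c(t)\,dt = \infty$ and the convergence of $f(\gamma(t))$. Your observation that Theorem~\ref{HadamardTheorem} cannot be used as a black box (since $\int_0^\infty c(t)\,dt=\infty$ does not bound $t\,c(t)$ from below) is exactly why the paper, too, re-executes the curve-selection argument with the sharper bound $\nu_f(\gamma(t)) \ge c(t)$.
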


\begin{proof} 
By assumption, for every $x \in \R^n$ we have $\nu_f(x) > 0$ and so $x$ is a regular point of $f.$ In view of Theorem~\ref{InverseTheorem}, the mapping $f$ is a local homeomorphism. Hence it remains to show that $f$ is proper.

Suppose to the contrary that there exists a sequence $x_k \in \R^n$ such that 
$$\|x_k\|\to\infty \quad \textrm{ and } \quad f(x_k) \to y.$$
In view of Lemma~\ref{Lemma21}, $B_f$ is a definable nowhere dense subset of $\mathbb{R}^n.$ By Lemma~\ref{Claim31}, we can assume that $x_k \not \in B_f$ for all $k.$ Using the curve selection lemma (Lemma~\ref{selectioncurve}), we can find a definable $C^1$-curve $\gamma \colon [r, +\infty) \to \R^n \setminus B_f$ ($r >0$) such that
$$\|\gamma(t)\| = t \quad \textrm{ and } \quad f(\gamma (t)) \to y \ {\rm as}\ t \to \infty.$$
By assumption, we have 
$$\nu_f(\gamma(t)) \geq c(\|\gamma(t)\|).$$
Using the argument used in the proof of Theorem~\ref{HadamardTheorem}, we conclude that 
$$\|f'(\gamma (t))\gamma'(t)\|\geq {c(t)} \quad \textrm{ for all } \quad t \ge r,$$
which implies that
$$\int_r^\infty\| (f \circ \gamma)'(t)\|dt  = \int_r^\infty\|f'(\gamma (t))\gamma'(t)\|dt \geq \int_r^\infty{c(t)}dt = \infty.$$
So $(f \circ \gamma)(t)$ has no limit in $\mathbb{R}^n,$ which is a contradiction to $f(\gamma (t)) \to y $.
\end{proof}


\end{document}